\newtheorem*{mainthm}{Main Theorem}
\newtheorem*{cor3}{Corollary 3}
\newtheorem*{cor2}{Corollary 2}
\newtheorem*{cor1}{Corollary 1}
\newtheorem{theorem}{Theorem}[section]
\newtheorem{lemma}[theorem]{Lemma}
\newtheorem{proposition}[theorem]{Proposition}
\newtheorem{corollary}[theorem]{Corollary}
\newtheorem{remark}[theorem]{Remark}
\title{A cohomological characterization of locally virtually cyclic groups}
\date{\today}
\author{Dieter Degrijse}
\address{School of Mathematics, Statistics and Applied Mathematics, NUI Galway, Ireland}
\email{dieter.degrijse@nuigalway.ie}%
\newcommand{\mF}{\mathcal {F}}
\newcommand{\Z}{\mathbb Z}
\newcommand{\orb}{\mathcal{O}_{\mF}G}
\newcommand{\orbmod}{\mbox{Mod-}\mathcal{O}_{\mF}G}
\begin{document}

\maketitle
\begin{abstract} \noindent
	We show that a countable group is locally virtually cyclic if and only if its  Bredon cohomological dimension for the family of virtually cyclic subgroups is at most one.

\end{abstract}

\section{Introduction}
Let $G$ be a discrete group and recall that the geometric dimension $\mathrm{gd}(G)$ of $G$ is the smallest possible dimension of a free contractible $G$-CW-complex $EG$.  The cohomological dimension $\mathrm{cd}(G)$ of $G$ is by definition the shortest length of a projective resolution of $\mathbb{Z}$ over the group ring $\mathbb{Z}[G]$. The cohomological dimension is the algebraic counterpart of the geometric dimension and satisfies $\mathrm{cd}(G)\leq \mathrm{gd}(G)\leq \max\{3,\mathrm{cd}(G)\}$ by a classic construction of Eilenberg and Ganea (\cite{EilenbergGanea}). One clearly has $\mathrm{gd}(G)=0$ if and only if $G$ is trivial and it is a standard exercise to check that this is equivalent with $\mathrm{cd}(G)=0$. An important and well-known result by Stallings-Swan  (\cite{Stallings},\cite{Swan}) says that
\[ \mathrm{cd}(G)=1 \Longleftrightarrow \mathrm{gd}(G)=1, \] which is equivalent with $G$ being  a non-trivial free group. Hence, the only possible difference between $\mathrm{cd}(G)$ and $\mathrm{gd}(G)$ is that there might exist a group with $\mathrm{cd}(G)=2$ but $\mathrm{gd}(G)=3$. It is a long-standing open question, known as the Eilenberg-Ganea problem, whether or not such a group exists. 

One can generalize the setup above from free actions to actions with stabilizers lying in a certain family of subgroups  $\mathcal{F}$ of $G$, i.e.~a collection of subgroups that is closed under conjugation and taking subgroups. A classifying  $G$-CW-complex for the family $\mathcal{F}$, or model for $E_{\mathcal{F}}G$, is a $G$-$CW$-complex $X$ with stabilizers in $\mathcal{F}$ such that the fixed point sets $X^H$ are contractible for all $H \in \mathcal{F}$. The geometric dimension of $G$ for the family $\mathcal{F}$, denoted by $\mathrm{gd}_{\mathcal{F}}(G)$, is by definition the smallest possible dimension of a model for $E_{\mathcal{F}}G$. This geometric dimension also has an algebraic counterpart: the Bredon cohomological dimension for the family $\mathcal{F}$, denoted by $\mathrm{cd}_{\mathcal{F}}(G)$. The Bredon cohomology of $G$ for the family $\mathcal{F}$ is a generalization of the ordinary group cohomology of G, where the
category of $\mathbb{Z}[G]$-modules is replaced by the category $\mathrm{Mod-}\mathcal{O}_{\mathcal{F}}G$ of contravariant functors
from the orbit category $\mathcal{O}_{\mathcal{F}}G$ to the category of abelian groups. The algebraic invariant $\mathrm{cd}_{\mathcal{F}}(G)$ is defined as the shortest length of a projective resolution of the constant functor $\underline{\mathbb{Z}}$ in the category $\mathrm{Mod-}\mathcal{O}_{\mathcal{F}}G$. It is easy to check that if  one lets $\mathcal{F}$ be the family containing only the trivial subgroup of $G$, then this setup coincides with the free case described in the first paragraph. In \cite[Th. 0.1.]{LuckMeintrup} it is shown that 
\[   \mathrm{cd}_{\mathcal{F}}(G)\leq  \mathrm{gd}_{\mathcal{F}}(G) \leq \max\{3, \mathrm{cd}_{\mathcal{F}}(G)\}.    \]
Moreover, it is easy to see that $\mathrm{gd}_{\mathcal{F}}(G)=0$ if and only if $G \in \mathcal{F}$, and the fact that this is equivalent with  $\mathrm{cd}_{\mathcal{F}}(G)=0$ follows from a result of Symonds (\cite[Lemma 2.5]{Symonds}). It seems out of reach at the moment to say anything more about the relationship between $\mathrm{cd}_{\mathcal{F}}(G)$ and  $\mathrm{gd}_{\mathcal{F}}(G)$ for a general family $\mathcal{F}$. But one can say more when one considers either the family of finite subgroups $\mathcal{FIN}$ or the family of virtually cyclic subgroup $\mathcal{VC}$. Besides the trivial family, the family of finite subgroups and the family of virtually cyclic subgroups have been studied extensively over the last years. This is partially due to their appearance in the Baum-Connes and Farrell-Jones isomorphism conjectures. These conjectures predict isomorphisms between certain equivariant cohomology theories of classifying spaces for families of $G$ and $K$- and $L$-theories of reduced group $C^*$-algebras and group rings of $G$ (e.g. see \cite{LuckReich}). 

When $\mathcal{F}=\mathcal{FIN}$, the numbers $\mathrm{cd}_{\mathcal{F}}(G)$ and $\mathrm{gd}_{\mathcal{F}}(G)$ are denoted by $ \underline{\mathrm{cd}}(G)$ and $\underline{\mathrm{gd}}(G)$, respectively. Note that $\mathrm{cd}_{\mathbb{Q}}(G)\leq \underline{\mathrm{cd}}(G)$, since evaluating a projective $\mathcal{O}_{\mathcal{FIN}}G$-resolution of $\underline{\mathbb{Z}}$ at $G/e$ and tensoring the result with $\mathbb{Q}$ yields a projective $\mathbb{Q}[G]$-resolution of $\mathbb{Q}$. If follows from the work of Dunwoody in \cite{Dunwoody} that
\[ \underline{\mathrm{cd}}(G)=1 \Longleftrightarrow  \underline{\mathrm{gd}}(G)=1\] and that all finitely generated groups with $\underline{\mathrm{cd}}(G)\leq 1$ are virtually free. In this context the answer to the Eilenberg-Ganea problem is also known. Indeed, in \cite{BradyLearyNucinkis} Brady, Leary and Nucinkis construct certain right angled Coxeter groups $G$ with $\underline{\mathrm{cd}}(G)=2$ but $\underline{\mathrm{gd}}(G)=3$. 

Finally, we turn to the family of virtually cyclic subgroups $\mathcal{F}=\mathcal{VC}$. In this case $\mathrm{cd}_{\mathcal{F}}(G)$ and  $\mathrm{gd}_{\mathcal{F}}(G)$ are denoted by $ \underline{\underline{\mathrm{cd}}}(G)$ and $\underline{\underline{\mathrm{gd}}}(G)$, respectively. Here the answer to the Eilenberg-Ganea problem is also known: in \cite{FluchLeary} Fluch and Leary show that certain right angled Coxeter groups $G$ satisfy $\underline{\underline{\mathrm{cd}}}(G)=2$ but $\underline{\underline{\mathrm{gd}}}(G)=3$. In the same paper, Fluch and Leary also ask if $\underline{\underline{\mathrm{cd}}}(G)=1$ if and only if $\underline{\underline{\mathrm{gd}}}(G)=1$. We prove the following.

\begin{mainthm} If a group $G$ satisfies $\underline{\underline{\mathrm{cd}}}(G)\leq 1$, then it is locally virtually cyclic.
\end{mainthm}
The proof of the Main theorem will be finalized in Section 5. By a result of Serre \cite[Cor. 2 p 64]{SerreTrees}, $\underline{\underline{\mathrm{gd}}}(G)\leq 1$ implies that $G$ is locally virtually cyclic. On the other hand, all countable locally virtually cyclic groups $G$ satisfy $\underline{\underline{\mathrm{gd}}}(G)\leq 1$ by \cite[Th.~5.33]{LuckWeiermann}. Hence, our Main Theorem implies the following. 
\begin{cor1}For any countable group $G$, one has 
\[\underline{\underline{\mathrm{cd}}}(G)=1 \Longleftrightarrow  \underline{\underline{\mathrm{gd}}}(G)=1.
\]
\end{cor1}

Recall that a right $\mathcal{O}_{\mathcal{VC}}(G)$-module $M$ is called flat if the categorical tensor product $M\otimes_{\mathcal{O}_{\mathcal{VC}}G}-$ is an exact functor from the category of left $\mathcal{O}_{\mathcal{VC}}(G)$-module to abelian groups. By definition, the constant functor $\underline{\mathbb{Z}}$ is flat if and only if the Bredon homological dimension for the family of virtually cyclic subgroups $\underline{\underline{\mathrm{hd}}}(G)$ equals zero. Since $\underline{\underline{\mathrm{hd}}}(G)\leq \underline{\underline{\mathrm{cd}}}(G)\leq\underline{\underline{\mathrm{hd}}}(G)+1$, when $G$ is countable (see \cite[Prop. 3.5]{nucinkis} and \cite[Section 3.4]{FluchThesis}) and locally virtually cyclic groups $G$ satisfy $\underline{\underline{\mathrm{hd}}}(G)=0$ by \cite[Prop 3.1(i)]{FluchNucinkis}, the Main Theorem implies the following.

\begin{cor2} A countable group $G$ is locally virtually cyclic if and only if the constant functor $\underline{\mathbb{Z}}$ is a flat $\mathcal{O}_{\mathcal{VC}}(G)$-module.

\end{cor2}
In light of Corollary 2, we cannot resist mentioning a conjecture of L\"{u}ck (\cite[ 6.49]{LuckL2Book}) saying that a countable group $G$ is locally virtually cyclic if and only if the group von Neumann algebra $\mathcal{N}(G)$ is a flat $\mathbb{C}[G]$-module. \\

Let $F_2$ be the free group on $2$ generators and recall that a group is said to be coherent if all its finitely generated subgroups are finitely presented. As a by-product of our proof of the Main Theorem, we can also show that certain groups with $\underline{\mathrm{cd}}(G)=2$ admit a three dimensional model for $\underline{\underline{E}}G$.
\begin{cor3} Let $G$ be a countable group satisfying  $\underline{\mathrm{cd}}(G)=2$. If one of the following hold
\begin{itemize}
\item[(a)] $G$ is coherent
\item[(b)] $G$ is torsion-free
\item[(c)] $G$ does not contain $F_2$,
\end{itemize}
then $1 \leq \underline{\underline{\mathrm{cd}}}(G)\leq 3$.  Moreover, every one-relator group has a three dimensional model for its classifying space with virtually cyclic stabilizers.

\end{cor3}
\noindent The proof of this corollary will be given at the end of Section 4. Note that it follows from \cite[Prop. 5.1]{DP} and \cite[Corollary 4]{DP2} that the bounds for $\underline{\underline{\mathrm{cd}}}(G)$ given above are optimal and that all intermediate values can occur.
Motivated by Remark \ref{remark}, we conjecture that $\underline{\underline{\mathrm{gd}}}(G)\leq 3$ for all countable groups with $\underline{\mathrm{cd}}(G)=2$.

\section{Preliminaries}
We start by recalling some basic notions in Bredon cohomology. This cohomology theory was introduced by Bredon in \cite{Bredon} for finite groups as a mean to develop an obstruction theory for equivariant extension of maps. It was later generalized to arbitrary groups by L\"{u}ck with applications to finiteness conditions  (see \cite[section 9]{Luck}). We refer the reader to \cite{FluchThesis} and \cite{Luck} for further details. Let $G$ be a discrete group and let $\mathcal{F}$ be a family of subgroups of $G$. The orbit category $\orb$ is a category whose objects that are the left cosets $G/H$ for all $H \in \mathcal{F}$ and whose morphisms are all $G$-equivariant maps between the objects.
A (right) $\orb$-module is a contravariant functor $M: \orb \rightarrow \mathbb{Z}\mbox{-mod}$. The category of $\orb$-modules is denoted by $\orbmod$ and is defined by the objects that are all the $\orb$-modules and the morphisms are all the natural transformations between the objects.
This category is abelian and has free objects. More precisely, its free objects are isomorphic to direct sums of modules of the form $\Z[-,G/K]$
where $K\in\mathcal{F}$ and $\Z[G/H,G/K]$ is the free $\Z$-module with basis the set of $G$-maps from $G/H$ to $G/K$. A sequence of modules in $\orbmod$ is said to be exact if it is exact when evaluated at every object. There is also a Yoneda lemma that allows one to construct free (projective) resolutions in a similar way as in the case of ordinary cohomology. It follows that $\orbmod$ contains enough projectives to construct projective resolutions. Hence, one can construct Ext-functors $\mathrm{Ext}^{n}_{\orb}(-,M)$ that have all the usual properties. If $V$ is a $G$-module, we denote by \[\underline{V}: \mathcal{O}_{\mathcal{F}}G \rightarrow \mathbb{Z}\mbox{-mod}\] the contravariant functor that maps $G/H$ to the fixed points $V^H$ and maps morphisms to the obvious restriction maps. The $n$-th Bredon cohomology of $G$ with coefficients $M \in \orbmod$ is by definition
\[ \mathrm{H}^n_{\mathcal{F}}(G,M)= \mathrm{Ext}^{n}_{\orb}(\underline{\mathbb{Z}},M) \]
where $\mathbb{Z}$ is a trivial $G$-module. Low-dimensional Bredon cohomology with fixed point functor coefficients can be understood in terms of ordinary group cohomology.
\begin{lemma}{\cite[Cor. 5.6, 5.14]{DP3}} \label{lem: fixed point} For any $G$-module $V$ one has $$\mathrm{H}^0_{\mathcal{F}}(G,\underline{V})=V^G$$
	and
	\[     \mathrm{H}^1_{\mathcal{F}}(G,\underline{V})=\bigcap_{K \in \mathcal{F}} \ker :  \mathrm{H}^1(G,V) \rightarrow  \mathrm{H}^1(K,V).  \]
Moreover, if $\mathrm{H}^1(K,V)=0$ for all $K \in \mathcal{F}$, then 	
	\[  \mathrm{H}^2_{\mathcal{F}}(G,\underline{V})=\bigcap_{K \in \mathcal{F}} \ker :  \mathrm{H}^2(G,V) \rightarrow  \mathrm{H}^2(K,V).  \]

\end{lemma}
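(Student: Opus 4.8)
The plan is to reduce the computation to ordinary group cohomology by comparing two resolutions. Fix a free resolution $F_\ast \to \underline{\mathbb{Z}}$ of right $\mathcal{O}_{\mathcal{F}}G$-modules with $F_p = \bigoplus_{i \in I_p} \mathbb{Z}[-, G/K_{p,i}]$, all $K_{p,i} \in \mathcal{F}$, and with augmentation $F_0 \to \underline{\mathbb{Z}}$ the canonical map determined under Yoneda by the tautological elements $1 \in \underline{\mathbb{Z}}(G/K_{0,i}) = \mathbb{Z}$; such a resolution exists since $\mathcal{O}_{\mathcal{F}}G$-modules have enough frees. Because $\{e\} \in \mathcal{F}$, the object $G/e$ lies in $\mathcal{O}_{\mathcal{F}}G$, and since exactness is tested objectwise, evaluation at $G/e$ yields a resolution $C_\ast := F_\ast(G/e) \to \mathbb{Z}$ of the trivial $\mathbb{Z}G$-module by permutation modules $C_p = \bigoplus_i \mathbb{Z}[G/K_{p,i}]$ with $K_{p,i} \in \mathcal{F}$. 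The natural Yoneda identification $\mathrm{Hom}_{\mathcal{O}_{\mathcal{F}}G}(\mathbb{Z}[-, G/K], \underline{V}) \cong V^K \cong \mathrm{Hom}_{\mathbb{Z}G}(\mathbb{Z}[G/K], V)$ matches the Bredon cochain complex $\mathrm{Hom}_{\mathcal{O}_{\mathcal{F}}G}(F_\ast, \underline{V})$ with $\mathrm{Hom}_{\mathbb{Z}G}(C_\ast, V)$, so $\mathrm{H}^n_{\mathcal{F}}(G, \underline{V}) = \mathrm{H}^n\big(\mathrm{Hom}_{\mathbb{Z}G}(C_\ast, V)\big)$. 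The case $n=0$ is then immediate: exactness of $C_1 \to C_0 \to \mathbb{Z} \to 0$ and left-exactness of $\mathrm{Hom}_{\mathbb{Z}G}(-, V)$ give $\mathrm{H}^0_{\mathcal{F}}(G, \underline{V}) = \mathrm{Hom}_{\mathbb{Z}G}(\mathbb{Z}, V) = V^G$.

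For the remaining statements I would compare this with the usual computation of $\mathrm{H}^\ast(G, V) = \mathrm{Ext}^\ast_{\mathbb{Z}G}(\mathbb{Z}, V)$. As $C_\ast \to \mathbb{Z}$ is a quasi-isomorphism, $\mathrm{RHom}_{\mathbb{Z}G}(\mathbb{Z}, V) \cong \mathrm{RHom}_{\mathbb{Z}G}(C_\ast, V)$, and the latter carries the hyper-$\mathrm{Ext}$ spectral sequence
\[ E_1^{p,q} = \mathrm{Ext}^q_{\mathbb{Z}G}(C_p, V) = \prod_{i \in I_p} \mathrm{H}^q(K_{p,i}, V) \;\Longrightarrow\; \mathrm{H}^{p+q}(G, V), \]
using $\mathrm{Ext}^q_{\mathbb{Z}G}(\mathbb{Z}[G/K], V) \cong \mathrm{H}^q(K, V)$ (Shapiro). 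Since $d_1$ is induced by the differential of $C_\ast$, we get $E_2^{p,0} = \mathrm{H}^p(\mathrm{Hom}_{\mathbb{Z}G}(C_\ast, V)) = \mathrm{H}^p_{\mathcal{F}}(G, \underline{V})$. The point to verify from naturality of Shapiro's isomorphism is that the bottom-row edge map $\mathrm{H}^n(G, V) \to E_1^{0,n} = \prod_i \mathrm{H}^n(K_{0,i}, V)$ is the product of restriction maps $\mathrm{res}_{K_{0,i}}$; here it is essential that the augmentation $C_0 \to \mathbb{Z}$ was normalized as a sum of standard augmentations $\mathbb{Z}[G/K_{0,i}] \to \mathbb{Z}$, so that no spurious integer multiples intervene.

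For $\mathrm{H}^1$ I would then invoke the five-term exact sequence $0 \to E_2^{1,0} \to \mathrm{H}^1(G,V) \to E_2^{0,1}$, which identifies $\mathrm{H}^1_{\mathcal{F}}(G, \underline{V}) = E_2^{1,0}$ with the kernel of the edge map into $E_2^{0,1} \subseteq E_1^{0,1} = \prod_i \mathrm{H}^1(K_{0,i}, V)$, that is, with $\bigcap_i \ker\big(\mathrm{res}_{K_{0,i}}\colon \mathrm{H}^1(G,V) \to \mathrm{H}^1(K_{0,i}, V)\big)$. To pass from $\{K_{0,i}\}$ to all of $\mathcal{F}$: each $K_{0,i} \in \mathcal{F}$, while surjectivity of $F_0 \to \underline{\mathbb{Z}}$ forces every $K \in \mathcal{F}$ to be subconjugate to some $K_{0,i}$, whence $\ker(\mathrm{res}_{K_{0,i}}) \subseteq \ker(\mathrm{res}_K)$ and the two intersections coincide. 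This gives the stated formula.

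Finally, the hypothesis $\mathrm{H}^1(K, V) = 0$ for all $K \in \mathcal{F}$ makes the entire row $E_1^{p,1} = \prod_i \mathrm{H}^1(K_{p,i}, V)$ vanish, so $E_2^{p,1} = 0$. Feeding $E_2^{0,1} = E_2^{1,1} = 0$ into the filtration of $\mathrm{H}^2(G,V)$ collapses it: the edge map $E_2^{2,0} \to \mathrm{H}^2(G,V)$ is injective, since its kernel is the image of $d_2\colon E_2^{0,1} \to E_2^{2,0}$, now zero, and since $E_\infty^{1,1} = 0$ its image equals $\ker\big(\mathrm{H}^2(G,V) \to E_\infty^{0,2}\big)$ with $E_\infty^{0,2} \hookrightarrow E_2^{0,2} \subseteq \prod_i \mathrm{H}^2(K_{0,i}, V)$ (the column $p=0$ carries only outgoing differentials). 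Identifying this composite with the restrictions as before and passing to the full family yields $\mathrm{H}^2_{\mathcal{F}}(G, \underline{V}) = \bigcap_{K \in \mathcal{F}} \ker(\mathrm{res}_K\colon \mathrm{H}^2(G,V) \to \mathrm{H}^2(K,V))$. The main obstacle throughout is the spectral-sequence bookkeeping — pinning down that the edge maps really are restriction maps (forcing the normalization of the augmentation) and controlling the filtration of $\mathrm{H}^2$ under the vanishing hypothesis; once these are settled, the passage between the groups $K_{0,i}$ and the full family $\mathcal{F}$ is a routine subconjugacy argument.
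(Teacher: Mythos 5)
The paper gives no internal proof of this lemma --- it is quoted from \cite[Cor.~5.6, 5.14]{DP3} --- but your argument is correct and is essentially the standard argument behind those results: evaluating a free Bredon resolution at $G/e$ and running the hyper-$\mathrm{Ext}$ spectral sequence of the resulting permutation-module resolution is precisely the comparison spectral sequence relating $\mathrm{H}^\ast_{\mathcal{F}}(G,-)$ to $\mathrm{H}^\ast(G,V)$ (your $E_2^{p,q}$ is $\mathrm{H}^p_{\mathcal{F}}(G,-)$ applied to the coefficient functor $G/K\mapsto \mathrm{H}^q(K,V)$), with the $\mathrm{H}^1$ statement from the five-term sequence and the $\mathrm{H}^2$ statement from the collapse of the $q=1$ row. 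Your care points are all handled soundly --- normalizing the augmentation so the edge maps are genuine restrictions, the subconjugacy argument passing from the $K_{0,i}$ to all of $\mathcal{F}$, and the filtration bookkeeping for $\mathrm{H}^2$ --- and the one step you assert without detail, the matching of differentials between $\mathrm{Hom}_{\mathcal{O}_{\mathcal{F}}G}(F_\ast,\underline{V})$ and $\mathrm{Hom}_{\mathbb{Z}G}(C_\ast,V)$, follows in one line from the adjunction $\mathrm{Hom}_{\mathcal{O}_{\mathcal{F}}G}(M,\underline{V})\cong \mathrm{Hom}_{\mathbb{Z}G}(M(G/e),V)$, natural in $M$, since $\underline{V}$ is the right adjoint of evaluation at $G/e$.
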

The  Bredon cohomological dimension of $G$ for the family $\mathcal{F}$, denoted by $\mathrm{cd}_{\mathcal{F}}(G)$, is defined as
\[ \mathrm{cd}_{\mathcal{F}}(G) = \sup\{ n \in \mathbb{N} \ | \ \exists M \in \orbmod :  \mathrm{H}^n_{\mathcal{F}}(G,M)\neq 0 \}. \]
By standard methods in homological algebra one shows that this number coincides with the length of the shortest projective (or free) resolution of $\underline{\mathbb{Z}}$ in  $\orbmod$. Using a version of Shapiro's lemma for Bredon cohomology, one can show that $$\mathrm{cd}_{\mathcal{F}\cap S}(S) \leq  \mathrm{cd}_{\mathcal{F}}(G)$$ for any subgroup $S$ of $G$. \\

For the remainder of this paper, $\mathcal{F}$ will always denote the family of finite subgroups.\\

A key ingredient in the proof of the Main Theorem is the following construction of L\"{u}ck and Weiermann (see \cite{LuckWeiermann}) which relates Bredon cohomology for the family of virtually cyclic subgroups $\mathcal{VC}$ to Bredon cohomology for the family of finite subgroups $\mathcal{F}$. Let $\mathcal{S}$ denote the set of infinite virtually cyclic subgroups of $G$. As in   \cite[2.2]{LuckWeiermann}, two infinite virtually cyclic subgroups $H$ and $K$ of $G$ are said to be equivalent, denoted $H \sim K$, if \[|H\cap K|=\infty\]
i.e. if $H$ and $K$ are commensurable, meaning that their intersection has finite index in both $H$ and $K$. Using the fact that any two infinite virtually cyclic subgroups of a virtually cyclic group are equivalent (e.g.~see Lemma 3.1.~in \cite{DP1}), it is easily seen that this indeed defines an equivalence relation on $\mathcal{S}$. One can also verify that this equivalence relation satisfies the following two properties
\begin{itemize}
	\item[-]  $\forall H,K \in \mathcal{S} : H \subseteq K \Rightarrow H \sim K$;
	\item[-] $ \forall H,K \in \mathcal{S},\forall g \in G: H \sim K \Leftrightarrow H^g \sim K^g$.
\end{itemize}
Let $H \in \mathcal{S}$ and define the group
\[     \mathrm{N}_G[H]=\{g \in G \ | \ H^g \sim H \}. \]
The group $ \mathrm{N}_G[H]$ is called the commensurator of $H$ in $G$. It is sometimes denoted by $\mathrm{Comm}_G(H)$ and should not be confused with the normalizer of $H$ in $G$. Note that $\mathrm{N}_G[H]$ depends only on the equivalence class $[H]$ of $H$ and contains $H$ as a subgroup. Define for $H \in \mathcal{S}$ the following family of subgroups of $\mathrm{N}_G[H]$
\[ \mathcal{F}[H]=\{ K \subseteq \mathrm{N}_G[H] \ | K \in \mathcal{S}, K \sim H\} \cup \Big(\mathrm{N}_G[H] \cap \mathcal{F}\Big). \]
In other words,  $\mathcal{F}[H]$ contains all finite subgroups of $ \mathrm{N}_G[H]$ and all infinite virtually cyclic subgroups of $G$ that are equivalent to $H$. The pushout diagram in Theorem $2.3$ of \cite{LuckWeiermann} now yields the following (see also \cite[\S 7]{DP}).
\begin{proposition}[L\"{u}ck-Weiermann, \cite{LuckWeiermann}] \label{prop: long exact} \label{prop: push out} With the notation above, let $[\mathcal{S}]$ denote the set of equivalence classes of $\mathcal S$   and let $\mathcal{I}$ be a complete set of representatives $[H]$ of the orbits of the conjugation action of $G$ on $[\mathcal{S}]$. For every $M \in \mbox{Mod-}\mathcal{O}_{\mathcal{VC}}G$, there exists a long exact sequence
	\[ \ldots \rightarrow \mathrm{H}^{i}_{\mathcal{VC}}(G,M) \rightarrow \Big(\prod_{[H] \in \mathcal{I}} \mathrm{H}^{i}_{\mathcal{F}[H] }( \mathrm{N}_G[H],M)\Big)\oplus  \mathrm{H}^{i}_{\mathcal{F}}(G,M) \rightarrow  \prod_{[H] \in \mathcal{I}} \mathrm{H}^{i}_{\mathcal{F} }( \mathrm{N}_G[H],M)  \]
	\[ \rightarrow \mathrm{H}^{i+1}_{\mathcal{VC}}(G,M) \rightarrow \ldots \ . \]
\end{proposition}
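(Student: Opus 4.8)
\emph{The plan} is to obtain the sequence as the Mayer--Vietoris sequence associated to the $G$-pushout of classifying spaces constructed by L\"uck and Weiermann, after identifying the cohomology of the induced pieces by means of a Shapiro-type induction isomorphism. First I would apply \cite[Theorem 2.3]{LuckWeiermann} to the nested families $\mathcal{F}\subseteq\mathcal{VC}$ equipped with the commensurability relation $\sim$ on $\mathcal{S}$. The two bulleted properties of $\sim$ recorded above are exactly the hypotheses required by that theorem, so it applies and yields a $G$-pushout
\[
\begin{CD}
\coprod_{[H]\in\mathcal{I}} G\times_{\mathrm{N}_G[H]} E_{\mathcal{F}}(\mathrm{N}_G[H]) @>>> E_{\mathcal{F}}G \\
@VVV @VVV \\
\coprod_{[H]\in\mathcal{I}} G\times_{\mathrm{N}_G[H]} E_{\mathcal{F}[H]}(\mathrm{N}_G[H]) @>>> E_{\mathcal{VC}}G
\end{CD}
\]
in which the upper horizontal arrow is a $G$-cofibration and the bottom-right corner is a model for $E_{\mathcal{VC}}G$. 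Here $E_{\mathcal{F}[H]}(\mathrm{N}_G[H])$ and $E_{\mathcal{F}}(\mathrm{N}_G[H])$ denote classifying spaces of the commensurator $\mathrm{N}_G[H]$ for the families $\mathcal{F}[H]$ and $\mathcal{F}\cap\mathrm{N}_G[H]$, respectively.

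\emph{Next}, I would apply equivariant Bredon cohomology $\mathrm{H}^{*}_{G}(-;M)$ to this pushout. Since it is a $G$-pushout along a $G$-cofibration it is in particular a $G$-homotopy pushout, so the standard Mayer--Vietoris long exact sequence of a $G$-cohomology theory applies, relating in each degree the cohomology of the bottom-right corner, the direct sum of the cohomologies of the two middle corners, and the cohomology of the top-left corner. By definition $\mathrm{H}^{i}_{G}(E_{\mathcal{VC}}G;M)=\mathrm{H}^{i}_{\mathcal{VC}}(G,M)$ and $\mathrm{H}^{i}_{G}(E_{\mathcal{F}}G;M)=\mathrm{H}^{i}_{\mathcal{F}}(G,M)$ (only the restriction of $M$ to $\mathcal{O}_{\mathcal{F}}G$ enters the Bredon cochain complex), which accounts for the two outer corners.

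\emph{To treat the induced corners}, I would use a version of Shapiro's lemma for Bredon cohomology (the same induction isomorphism underlying the inequality $\mathrm{cd}_{\mathcal{F}\cap S}(S)\le\mathrm{cd}_{\mathcal{F}}(G)$ quoted in the preliminaries): for a subgroup $L\le G$ and an $L$-CW-complex $Y$ one has $\mathrm{H}^{i}_{G}(G\times_{L}Y;M)\cong\mathrm{H}^{i}_{L}(Y;\mathrm{res}^{G}_{L}M)$. Applying this with $L=\mathrm{N}_G[H]$ and $Y=E_{\mathcal{F}[H]}(\mathrm{N}_G[H])$ gives $\mathrm{H}^{i}_{\mathcal{F}[H]}(\mathrm{N}_G[H],M)$, and with $Y=E_{\mathcal{F}}(\mathrm{N}_G[H])$ gives $\mathrm{H}^{i}_{\mathcal{F}}(\mathrm{N}_G[H],M)$. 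Finally, because Bredon cohomology sends the disjoint union $\coprod_{[H]\in\mathcal{I}}$ to a product, the induced corners become $\prod_{[H]\in\mathcal{I}}\mathrm{H}^{i}_{\mathcal{F}[H]}(\mathrm{N}_G[H],M)$ and $\prod_{[H]\in\mathcal{I}}\mathrm{H}^{i}_{\mathcal{F}}(\mathrm{N}_G[H],M)$. Substituting these identifications into the Mayer--Vietoris sequence yields precisely the asserted long exact sequence.

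\emph{The main obstacle} is the naturality and correct bookkeeping of the induction isomorphism in the contravariant (module-coefficient) setting: one must verify that the Shapiro isomorphism for $G\times_{L}Y$ is compatible with the maps in the Mayer--Vietoris sequence, and that restricting $M$ along the inclusion of orbit categories $\mathcal{O}_{\mathcal{F}[H]}(\mathrm{N}_G[H])\to\mathcal{O}_{\mathcal{VC}}G$ reproduces the coefficients appearing on the right-hand side. Since all of this is standard equivariant homotopy-theoretic machinery and the genuinely new geometric input is already supplied by \cite[Theorem 2.3]{LuckWeiermann}, the argument is essentially a formal consequence of that pushout, as also noted in \cite[\S 7]{DP}.
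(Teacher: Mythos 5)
Your proposal is correct and takes essentially the same route as the paper: the paper's entire proof consists of citing the $G$-pushout of \cite[Theorem 2.3]{LuckWeiermann} (whose hypotheses are exactly the two bulleted properties of $\sim$) and extracting the resulting Mayer--Vietoris sequence, with the Shapiro-type identification of the induced corners delegated to \cite[\S 7]{DP}. Your explicit attention to the induction isomorphism and coefficient bookkeeping is precisely the content of that reference, so nothing is missing.
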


We end this section with some simple observations about groups with $\underline{\underline{\mathrm{cd}}}(G)=1$. Recall that the Baumslag-Solitar group $\mathrm{BS}(1,m)$, with $m \in \mathbb{Z}\setminus\{0\}$, is the group with presentation
\[      \mathrm{BS}(1,m)=\langle t,x \ | \ txt^{-1}=x^m \rangle.   \]
 This group is solvable and is by definition and ascending HNN-extension of $\mathbb{Z}$.
\begin{lemma}\label{lem: basic} If $G$ is a group with $\underline{\underline{\mathrm{cd}}}(G)\leq 1$, then the following hold.
\begin{itemize}
\item[(1)] The group $G$ does not contain a copy of a Baumslag-Solitar group $\mathrm{BS}(1,m)$ for any $m \in \mathbb{Z}\setminus\{0\}$.
\item[(2)]  The group $G$ does not contain a copy of the free group on two generators 	$F_2$.
\item[(3)] For every right $\mathcal{O}_{\mathcal{VC}}(G)$-module $M$, there is an isomorphism 

\begin{equation*} 
\prod_{[H]\in \mathcal{I}}\mathrm{H}^2_{\mathcal{F}[H]}(\mathrm{N}_G[H],M)\oplus  \mathrm{H}^2_{\mathcal{F}}(G,M) \xrightarrow{\cong}  \prod_{[H]\in \mathcal{I}}\mathrm{H}^2_{\mathcal{F}}(\mathrm{N}_G[H],M).
\end{equation*}	
\end{itemize}
	
\end{lemma}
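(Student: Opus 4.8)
The three assertions are of two kinds: part (3) is a formal consequence of Proposition~\ref{prop: push out}, whereas (1) and (2) are non-embedding statements, which I would deduce from lower bounds on $\underline{\underline{\mathrm{cd}}}$ of the two model groups together with the subgroup estimate $\underline{\underline{\mathrm{cd}}}(S)\le\underline{\underline{\mathrm{cd}}}(G)$ recalled in the preliminaries (applied with $\mathcal{F}=\mathcal{VC}$, so that $\mathcal{VC}\cap S$ is the family of virtually cyclic subgroups of $S$). Let me dispose of (3) first, as it is immediate. Since $\underline{\underline{\mathrm{cd}}}(G)\le 1$, we have $\mathrm{H}^{2}_{\mathcal{VC}}(G,M)=\mathrm{H}^{3}_{\mathcal{VC}}(G,M)=0$ for every $M$. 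Feeding $i=2$ into the long exact sequence of Proposition~\ref{prop: push out}, these two outer terms vanish, so the middle map
\[\Big(\prod_{[H]\in\mathcal{I}}\mathrm{H}^{2}_{\mathcal{F}[H]}(\mathrm{N}_G[H],M)\Big)\oplus\mathrm{H}^{2}_{\mathcal{F}}(G,M)\longrightarrow\prod_{[H]\in\mathcal{I}}\mathrm{H}^{2}_{\mathcal{F}}(\mathrm{N}_G[H],M)\]
is forced to be an isomorphism, which is exactly the claim.

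For (1) and (2) the plan is to establish $\underline{\underline{\mathrm{cd}}}(F_2)\ge 2$ and $\underline{\underline{\mathrm{cd}}}(\mathrm{BS}(1,m))\ge 2$; the subgroup estimate then rules out these subgroups inside any $G$ with $\underline{\underline{\mathrm{cd}}}(G)\le 1$. The clean case is $F_2$. Being torsion-free, its family of finite subgroups is trivial, so $\mathrm{H}^{\ast}_{\mathcal{F}}$ is ordinary group cohomology by Lemma~\ref{lem: fixed point}; moreover every infinite virtually cyclic subgroup is infinite cyclic, the commensurability classes are the maximal cyclic subgroups, and for such an $H$ one has $\mathrm{N}_{F_2}[H]=H\cong\mathbb{Z}$. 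In particular $\mathrm{N}_{F_2}[H]\in\mathcal{F}[H]$, so $\mathrm{H}^{i}_{\mathcal{F}[H]}(\mathrm{N}_{F_2}[H],M)=0$ for $i\ge1$, while $\mathrm{H}^{1}_{\mathcal{F}}(\mathrm{N}_{F_2}[H],\underline{\mathbb{Z}})=\mathrm{H}^{1}(\mathbb{Z},\mathbb{Z})=\mathbb{Z}$. Since $\mathrm{cd}(F_2)=1$ also kills $\mathrm{H}^{2}_{\mathcal{F}}(F_2,\underline{\mathbb{Z}})$, the sequence of Proposition~\ref{prop: push out} degenerates to
\[\mathrm{H}^{1}(F_2,\mathbb{Z})\xrightarrow{\ \mathrm{res}\ }\prod_{[H]\in\mathcal{I}}\mathrm{H}^{1}(\mathbb{Z},\mathbb{Z})\longrightarrow\mathrm{H}^{2}_{\mathcal{VC}}(F_2,\underline{\mathbb{Z}})\longrightarrow 0.\]
As $\mathrm{H}^{1}(F_2,\mathbb{Z})\cong\mathbb{Z}^{2}$ while $\mathcal{I}$ is infinite (there are infinitely many conjugacy classes of maximal cyclic subgroups), the restriction map has countable image of rank at most two and cannot surject onto the infinite product; hence $\mathrm{H}^{2}_{\mathcal{VC}}(F_2,\underline{\mathbb{Z}})\ne 0$ and $\underline{\underline{\mathrm{cd}}}(F_2)\ge 2$.

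The Baumslag–Solitar case is where the real work lies. Writing $G=\mathrm{BS}(1,m)=N\rtimes\langle t\rangle$ with $N=\mathbb{Z}[1/m]$, the decisive difference from $F_2$ is that the base class $[\langle x\rangle]$ has commensurator $\mathrm{N}_G[\langle x\rangle]=G$: the relation $txt^{-1}=x^{m}$ shows that $\langle x\rangle$ and all of its $G$-conjugates are commensurable inside $N$. The hyperbolic classes (those projecting nontrivially to $G/N$) still have infinite cyclic commensurators as in the free case, but, crucially, they do not by themselves force nonvanishing here — for $m=\pm1$ the group $\mathbb{Z}^2$ is abelian and there are no hyperbolic classes at all, every class having commensurator equal to $G$. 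Thus the obstruction must be extracted from the classes with full commensurator, that is, from the term $\mathrm{H}^{\ast}_{\mathcal{F}[\langle x\rangle]}(G,M)$, whose computation is the main obstacle: here $\mathcal{F}[\langle x\rangle]$ consists of the (mutually commensurable) infinite cyclic subgroups of $N$ together with the trivial group, and a model for $E_{\mathcal{F}[\langle x\rangle]}G$ is governed by the $G$-action on the line $N\otimes\mathbb{R}\cong\mathbb{R}$. Running Proposition~\ref{prop: push out} with $M=\underline{\mathbb{Z}}$ should then produce a nonzero class in $\mathrm{H}^{n}_{\mathcal{VC}}(G,\underline{\mathbb{Z}})$ for some $n\ge2$, giving $\underline{\underline{\mathrm{cd}}}(\mathrm{BS}(1,m))\ge 2$; in the abelian case $m=\pm1$ this recovers the classical value $\underline{\underline{\mathrm{cd}}}(\mathbb{Z}^2)=3$, where the non-surjectivity comes instead from the diagonal map into the product over the infinitely many directions. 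I expect this $\mathrm{BS}(1,m)$ computation, rather than anything in (2) or (3), to be the crux of the lemma.
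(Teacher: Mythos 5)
Your treatment of part (3) is exactly the paper's: since $\underline{\underline{\mathrm{cd}}}(G)\le 1$ kills $\mathrm{H}^2_{\mathcal{VC}}(G,M)$ and $\mathrm{H}^3_{\mathcal{VC}}(G,M)$, the segment of the long exact sequence of Proposition \ref{prop: long exact} at $i=2$ forces the middle map to be an isomorphism. For (1) and (2) your strategy --- lower bounds for the two model groups combined with the subgroup inequality $\mathrm{cd}_{\mathcal{VC}\cap S}(S)\le\mathrm{cd}_{\mathcal{VC}}(G)$ --- is also the paper's strategy; the difference is that the paper simply cites $\underline{\underline{\mathrm{cd}}}(\mathrm{BS}(1,m))\ge 2$ from \cite[Corollary 3]{DP2} and $\underline{\underline{\mathrm{cd}}}(F_2)=2$ from \cite[Lemma 5.2]{DP2}, whereas you prove the $F_2$ bound from scratch. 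Your $F_2$ computation is correct and complete: maximal infinite cyclic subgroups of a free group are self-commensurating, so all $\mathcal{F}[H]$-terms vanish, $\mathrm{H}^2_{\mathcal{F}}(F_2,\underline{\mathbb{Z}})=\mathrm{H}^2(F_2,\mathbb{Z})=0$, and the image of the finitely generated group $\mathrm{H}^1(F_2,\mathbb{Z})\cong\mathbb{Z}^2$ cannot exhaust the uncountable product $\prod_{[H]\in\mathcal{I}}\mathbb{Z}$ over the infinitely many classes; this is in substance the argument behind the cited lemma, inlined.

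Part (1), however, is a genuine gap: you never prove $\underline{\underline{\mathrm{cd}}}(\mathrm{BS}(1,m))\ge 2$, only that running Proposition \ref{prop: long exact} ``should'' produce a nonzero class --- and the mechanism you propose fails as stated. You want to extract the obstruction from the term $\mathrm{H}^{\ast}_{\mathcal{F}[\langle x\rangle]}(G,M)$, but in degrees $\ge 2$ that term is identically zero: for $|m|\ge 2$ every subgroup equivalent to $\langle x\rangle$ lies in $N=\mathbb{Z}[1/m]$ (an element $(a,s)$ with $s\neq 0$ has no nontrivial power in $N$), hence acts elliptically with contractible fixed subtree on the Bass--Serre tree of the ascending HNN splitting, whose vertex stabilizers are conjugates of $\langle x\rangle$ and thus lie in $\mathcal{F}[\langle x\rangle]$ because $\mathrm{N}_G[\langle x\rangle]=G$; so that tree is a one-dimensional model for $E_{\mathcal{F}[\langle x\rangle]}G$, and for $m=\pm 1$ the line on which $G$ acts through its quotient by the saturation of $\langle x\rangle$ does the same job. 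Consequently $\mathrm{cd}_{\mathcal{F}[\langle x\rangle]}(G)\le 1$ and $\mathrm{H}^2_{\mathcal{F}[\langle x\rangle]}(G,M)=0$ for every $M$, so any nonvanishing of $\mathrm{H}^2_{\mathcal{VC}}$ must come from failure of surjectivity of the degree-one comparison map. There the counting trick from the $F_2$ case is not obviously available: in $\mathrm{BS}(1,2)$, for instance, every element $(a,1)\in\mathbb{Z}[1/2]\rtimes\mathbb{Z}$ is conjugate to $t$, so hyperbolic commensurability classes are far scarcer than in a free group, and one would have to either verify that the index set is still infinite or work with more elaborate coefficient modules $M$; the $m=\pm 1$ cases ($\mathbb{Z}^2$ and the Klein bottle group), for which your ``diagonal'' remark is again only a sketch, also need an actual argument. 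This is precisely the nontrivial content that \cite[Corollary 3]{DP2} supplies, and you yourself flag it as the crux of the lemma: as written, the proposal establishes (2) and (3) but not (1). Either carry out the $\mathrm{BS}(1,m)$ computation in detail or cite \cite[Corollary 3]{DP2}, as the paper does.
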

\begin{proof} Since $\underline{\underline{\mathrm{cd}}}(\mathrm{BS}(1,m))\geq 2$ by \cite[Corollary 3]{DP2} and $\underline{\underline{\mathrm{cd}}}(F_2)=2$ by \cite[Lemma 5.2]{DP2}, it follows that $G$ cannot contain copies of these groups. This proves (1) and (2). Part (3) is immediate from Proposition \ref{prop: long exact}.
	
\end{proof}
\begin{lemma} \label{lemma: ration dim} If $G$ is a finitely generated group with $\underline{\underline{\mathrm{cd}}}(G)= 1$, then $\mathrm{cd}_{\mathbb{Q}}(G)=2$ and 
	\[  \mathrm{H}^1(G,F)=0. \]
	for any free $\mathbb{Q}[G]$-module $F$.
\end{lemma}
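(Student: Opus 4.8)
The plan is to first pin down the number of ends of $G$, and then handle the two assertions separately. Since $G$ is finitely generated, Stallings' theorem tells us $G$ has $0$, $1$, $2$, or infinitely many ends, and I would eliminate all cases but one. If $G$ were finite or two-ended it would be virtually cyclic, hence lie in $\mathcal{VC}$ and satisfy $\underline{\underline{\mathrm{cd}}}(G)=0$, contradicting the hypothesis. If $G$ had infinitely many ends, then by Stallings' structure theorem it would split nontrivially over a finite subgroup and would therefore contain a copy of $F_2$, contradicting Lemma \ref{lem: basic}(2). Hence $G$ has exactly one end; in particular $G$ is infinite, is not virtually free (virtually free groups have $0$, $2$, or infinitely many ends), and $\mathrm{H}^1(G,\mathbb{Q}[G])=0$ by the standard ends interpretation of first cohomology with group-ring coefficients.

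Next I would upgrade $\mathrm{H}^1(G,\mathbb{Q}[G])=0$ to the vanishing for an arbitrary free module $F=\bigoplus_{\alpha}\mathbb{Q}[G]$, the subtlety being that $G$ is only assumed finitely generated (type $FP_1$), not $FP_2$, so $\mathrm{H}^1(G,-)$ need not obviously commute with infinite direct sums. I would argue through derivations: fixing generators $g_1,\dots,g_k$, any derivation $d\colon G\to F$ is determined by the values $d(g_i)$, each of finite support in the direct sum, so $d$ takes values in a finite subsum $\bigoplus_{\alpha\in S}\mathbb{Q}[G]$, and the same holds for principal derivations. Therefore $\mathrm{H}^1(G,F)=\varinjlim_{S}\bigoplus_{\alpha\in S}\mathrm{H}^1(G,\mathbb{Q}[G])=0$, where the directed limit runs over finite subsets $S$.

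For the computation of $\mathrm{cd}_{\mathbb{Q}}(G)$ I would establish the two bounds independently. Since $G$ is infinite we have $\mathrm{cd}_{\mathbb{Q}}(G)\geq 1$, and since $G$ is one-ended it is not virtually free; Dunwoody's rational Stallings--Swan theorem, identifying finitely generated groups of rational cohomological dimension at most one with virtually free groups, then forces $\mathrm{cd}_{\mathbb{Q}}(G)\geq 2$. For the upper bound I would use the hypothesis $\underline{\underline{\mathrm{cd}}}(G)=1$ directly: take a length-one projective resolution $0\to P_1\to P_0\to\underline{\mathbb{Z}}\to 0$ over the orbit category for $\mathcal{VC}$, evaluate at $G/e$, and tensor with $\mathbb{Q}$. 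Evaluation is exact, and the resulting $\mathbb{Q}[G]$-modules are summands of direct sums of permutation modules $\mathbb{Q}[G/K]\cong\mathbb{Q}[G]\otimes_{\mathbb{Q}[K]}\mathbb{Q}$ with $K\in\mathcal{VC}$; since induction preserves projective dimension, such a module is projective when $K$ is finite and has projective dimension $\mathrm{cd}_{\mathbb{Q}}(K)=1$ when $K$ is infinite virtually cyclic. Thus $\mathbb{Q}$ admits a length-one resolution by $\mathbb{Q}[G]$-modules of projective dimension at most one, and a routine dimension shift gives $\mathrm{cd}_{\mathbb{Q}}(G)=\mathrm{pd}_{\mathbb{Q}[G]}\mathbb{Q}\leq 2$. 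Combining the two bounds yields $\mathrm{cd}_{\mathbb{Q}}(G)=2$.

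The hard part, I expect, is not any single estimate but the two places where finiteness hypotheses must be handled with care: justifying that $\mathrm{H}^1$ commutes with the infinite direct sum using only finite generation (resolved by the derivation argument above rather than by an $FP_2$ hypothesis), and the homological bookkeeping in the upper bound, where the key input is that $\mathbb{Q}[G]\otimes_{\mathbb{Q}[K]}-$ preserves projective dimension so that $\mathrm{pd}_{\mathbb{Q}[G]}\mathbb{Q}[G/K]=\mathrm{cd}_{\mathbb{Q}}(K)\leq 1$. Everything else is an application of the ends theory together with Lemma \ref{lem: basic}.
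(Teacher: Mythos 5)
Your proof is correct, and it reaches the same three conclusions by largely the same inputs, but one component is genuinely routed differently. The paper gets the upper bound $\mathrm{cd}_{\mathbb{Q}}(G)\leq 2$ by citing \cite[Cor.\ 4.2]{DemPetTal}, which gives $\mathrm{cd}_{\mathbb{Q}}(G)\leq \underline{\mathrm{cd}}(G)\leq \underline{\underline{\mathrm{cd}}}(G)+1$, and then rules out $\mathrm{cd}_{\mathbb{Q}}(G)\leq 1$ via Dunwoody plus Lemma \ref{lem: basic}(2) (virtually free without $F_2$ forces virtually cyclic, contradicting $\underline{\underline{\mathrm{cd}}}(G)=1$); one-endedness is then deduced afterwards from Stallings and used for the free-module vanishing, with the passage from $\mathbb{Q}[G]$ to arbitrary free modules delegated to \cite[Lemma 1.1]{kropholler}. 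You instead bypass the family of finite subgroups entirely: you prove $\mathrm{cd}_{\mathbb{Q}}(G)\leq \underline{\underline{\mathrm{cd}}}(G)+1$ directly by evaluating a length-one projective $\mathcal{O}_{\mathcal{VC}}G$-resolution of $\underline{\mathbb{Z}}$ at $G/e$ and using that $\mathrm{pd}_{\mathbb{Q}[G]}\mathbb{Q}[G/K]=\mathrm{cd}_{\mathbb{Q}}(K)\leq 1$ for $K\in\mathcal{VC}$ (projective for $K$ finite by Maschke) --- this is exactly the $\mathcal{VC}$-analogue of the evaluation trick the paper records in the introduction for $\mathcal{FIN}$, and it buys a self-contained argument where the paper buys brevity by citation; likewise your derivation argument is a re-proof of Kropholler's Lemma 1.1. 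Your rearrangement of the ends analysis (one-endedness first, then Dunwoody contrapositively, versus the paper's Dunwoody-first contradiction) is logically equivalent. Two small points you should make explicit: the claim that infinitely many ends yields $F_2$ requires noting that the Stallings splitting over a finite subgroup cannot be an amalgam with both indices two nor an HNN with edge group equal to the vertex group (those groups are two-ended), after which ping-pong on the Bass--Serre tree produces $F_2$; and the identification of the number of ends with $1+\dim_{\mathbb{Q}}\mathrm{H}^1(G,\mathbb{Q}[G])$ over $\mathbb{Q}$ rather than $\mathbb{F}_2$ is not a formality --- it is precisely what Section 3 of the paper (following Swan) establishes, and is the natural citation at that step.
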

\begin{proof} It follows from \cite[Cor. 4.2]{DemPetTal} that $\mathrm{cd}_{\mathbb{Q}}(G)\leq \underline{\mathrm{cd}}(G)\leq 2$. If $\mathrm{cd}_{\mathbb{Q}}(G)\leq 1$, then $G$ is virtually free by Dunwoody's result mentioned in the introduction. However, since $G$ cannot contain $F_2$ by Lemma \ref{lem: basic}(2), we conclude that $G$ is virtually cyclic, implying that $\underline{\underline{\mathrm{cd}}}(G)= 0$. This is impossible so we conclude that $\mathrm{cd}_{\mathbb{Q}}(G)=2$. Since $G$ cannot contain $F_2$ and is not virtually cyclic, it follows from Stalling's theorem about ends of groups that $G$ is one-ended, i.e. $\mathrm{H}^1(G,\mathbb{Q}[G])=0$. Because $G$ is finitely generated, this implies that $\mathrm{H}^1(G,F)=0$
	for any free $\mathbb{Q}[G]$-module $F$.
\end{proof}
\section{relative ends}
Let $G$ be a finitely generated group and let $H$ be an infinite index finitely generated subgroup of $G$. For any commutative ring $R$ with unit, one defines $\mathrm{E}_H^G(R)$ to be the $G$-fixed points of the cokernel of the natural inclusion of right $G$-modules
\[ \mathcal{F}_H^G(R)= \mathrm{ind}_H^G (\mathrm{coind}_e^H (R))\rightarrow \mathrm{coind}_H^G (\mathrm{coind}_e^H(R)) =  \mathrm{coind}_e^G (R) .     \]
By considering the long exact cohomology sequence associated to the short exact sequence of right $G$-modules 
\[   0\rightarrow \mathcal{F}_H^G(R) \rightarrow \mathrm{coind}_e^G(R) \rightarrow  \mathrm{coind}_e^G(R)/\mathcal{F}_H^G(R) \rightarrow 0, \]
an application of Shapiro's lemma yields  the exact sequence
\begin{equation}  \label{eq: ends exact} 0 \rightarrow R \rightarrow E_H^G(R) \xrightarrow{\delta} \mathrm{H}^1(G,\mathcal{F}_H^G(R)) \rightarrow 0.  \end{equation}
In \cite{KrophollerRoller}, Kropholler and Roller define the number of ends of the pairs $(G,H)$ to be the number
\[   \tilde{e}(G,H)= \dim_{\mathbb{F}_2}\Big(E_H^G(\mathbb{F}_2)\Big).    \]
where $\mathbb{F}_2$ is the field with two elements. Note that if $H$ equals the trivial subgroup of $G$, then $\tilde{e}(G,H)$ coincides with the number of ends of $G$, mentioned in the previous section.

A subset $A$ of $G$ is said be $H$-finite (resp.~$H$-cofinite) if it (resp.~its complement) can be covered by finitely many right cosets of $H$. The subset $A$ is said to be proper if it is neither $H$-finite nor $H$-cofinite. A subset $B$ of $G$ is called $H$-almost invariant if the symmetric difference $B\Delta (Bg)$ is $H$-finite for all $g \in G$. As we shall see later on, the importance of proper $H$-almost invariant subsets of $G$ is contained in the fact that under certain additional hypotheses they can be used to construct non-trivial $G$-actions on trees.

Note that $\mathrm{coind}_e^G(\mathbb{F}_2)$ corresponds to the set of subsets of $G$ equipped with symmetric difference as group-law. One easily checks that the $H$-finite subsets of $G$ are exactly the elements of $\mathcal{F}_H^G(\mathbb{F}_2)$, while the $H$-cofinite subsets of $G$ correspond to elements $A$ of $\mathrm{coind}_e^G(\mathbb{F}_2)$ such that $G+A \in \mathcal{F}_{H}^G(\mathbb{F}_2)$. Finally, one verifies that the proper $H$-almost invariant subsets $B$ of $G$ correspond to the elements of $E_H^G(\mathbb{F}_2)$ that map to a non-zero element under the map $\delta$ appearing in (\ref{eq: ends exact}). These observations lead to the following lemma.

\begin{lemma}\cite[Lemma 3.1]{KrophollerRoller}\label{lemma : almost invariant} If $\tilde{e}(G,H)\geq 2$ and $G=\mathrm{N}_G[H]$, then $G$ has a proper $H$-almost invariant subset $B$ such that $BH=B$.
	
\end{lemma}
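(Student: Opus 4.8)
My plan is to start from the cohomological hypothesis and then symmetrize a proper almost invariant set by hand. Since $\tilde{e}(G,H)=\dim_{\mathbb{F}_2}E_H^G(\mathbb{F}_2)\geq 2$, the exact sequence (\ref{eq: ends exact}) forces $\mathrm{H}^1(G,\mathcal{F}_H^G(\mathbb{F}_2))\neq 0$, so there is an element of $E_H^G(\mathbb{F}_2)$ with nonzero image under $\delta$. By the dictionary recalled just before the statement, such an element is represented by a proper $H$-almost invariant subset $A\subseteq G$. The idea is then to produce the desired set as $B=AH$, which by construction satisfies $BH=B$; what will remain is to check that $B$ is still proper and $H$-almost invariant.

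The key reduction is that both being proper and being $H$-almost invariant are invariant under altering a set by an $H$-finite set. Indeed, if $B\Delta A$ is $H$-finite then $B$ is $H$-finite (resp.\ $H$-cofinite) if and only if $A$ is, and $Bg\Delta B$ is contained in $(A\Delta B)g\cup(Ag\Delta A)\cup(A\Delta B)$, a union of $H$-finite sets. Hence it suffices to prove that $AH\Delta A$ is $H$-finite. Since $A\subseteq AH$, one has $AH\Delta A=AH\setminus A$, and unwinding the definitions shows that $AH$ is exactly the union of all left cosets $pH$ meeting $A$; therefore $AH\setminus A=\bigcup(pH\setminus A)$, where the union ranges over the left cosets meeting both $A$ and its complement (the \emph{split} cosets).

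Two facts, both powered by the hypothesis $G=\mathrm{N}_G[H]$, will finish the argument. First, every left coset $pH$ is itself $H$-finite: it is covered by the right cosets $Hph$ for $h\in H$, of which there are exactly $[H:H\cap p^{-1}Hp]$, a finite number because $p\in\mathrm{N}_G[H]$. Second, there are only finitely many split cosets. To see this I would choose a finite symmetric generating set $S$ of $G$ containing a generating set of $H$, so that each coset $pH$ spans a connected subgraph of the Cayley graph; the $H$-almost invariance of $A$ gives that the vertex boundary $\partial A=\{x\in A:\ xs\notin A\text{ for some }s\in S\}$ is contained in $\bigcup_{s\in S}(A\Delta As^{-1})$ and is therefore $H$-finite, while every split coset must contain a point of $\partial A$. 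Since $\partial A$ lies in finitely many right cosets and, again by commensurability, each right coset $Hg$ meets only $[H:H\cap gHg^{-1}]<\infty$ left cosets, only finitely many cosets can be split. Consequently $AH\setminus A$ is a finite union of $H$-finite sets, hence $H$-finite, and $B=AH$ is the desired subset. The delicate point, and the place where the assumption $G=\mathrm{N}_G[H]$ is indispensable, is precisely this interaction between the left-coset structure defining $B$ and the right-coset structure governing $H$-finiteness: without commensurability the boundary of $A$ could spread over infinitely many cosets and the symmetrization $AH$ would no longer be close to $A$.
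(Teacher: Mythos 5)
Your argument is correct, but it takes a genuinely different route from the paper's. The paper proves the lemma cohomologically: it restricts the defining short exact sequence for $E_H^G(\mathbb{F}_2)$ to $H$, obtaining a commutative diagram whose bottom row ends in $\mathrm{H}^1(H,\mathcal{F}_H^G(\mathbb{F}_2))$; the hypothesis $G=\mathrm{N}_G[H]$ enters through the double coset formula, which exhibits $\mathcal{F}_H^G(\mathbb{F}_2)$, as an $H$-module, as a direct sum of modules coinduced from the trivial subgroup, whence (using finite generation of $H$) $\mathrm{H}^1(H,\mathcal{F}_H^G(\mathbb{F}_2))=0$; exactness of the bottom row then forces the class of a proper $H$-almost invariant set to lie in the image of $\alpha\colon \bigl(\mathrm{coind}_H^G(\mathbb{F}_2)\bigr)^H\to\bigl(\mathrm{coind}_e^G(\mathbb{F}_2)/\mathcal{F}_H^G(\mathbb{F}_2)\bigr)^H$, i.e.\ to admit an $H$-invariant representative. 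You instead make that representative explicit: starting from the same proper $H$-almost invariant $A$, you take $B=AH$ and verify combinatorially in the Cayley graph that $AH\setminus A$ is $H$-finite, using that perturbation by an $H$-finite set preserves both properness and $H$-almost invariance. The hypothesis $G=\mathrm{N}_G[H]$ plays the same role in both proofs, just in different clothing: in the paper it makes each double coset summand coinduced, while for you it bounds the indices $[H:H\cap p^{-1}Hp]$ and $[H:H\cap gHg^{-1}]$, so that each left coset is $H$-finite and each right coset meets only finitely many left cosets. Likewise both proofs need $H$ finitely generated (the paper to commute $\mathrm{H}^1$ with the direct sum; you to make the left cosets connected subgraphs) and $G$ finitely generated (you need $S$ finite so that $\partial A\subseteq\bigcup_{s\in S}(A\,\Delta\,As^{-1})$ is $H$-finite). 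What your approach buys is elementarity and transparency: it is in effect a hands-on proof that $\alpha$ hits the class of $A$, with the difference $B\,\Delta\,A$ exhibited concretely as a finite union of pieces of split cosets, whereas the paper's diagram chase is shorter once the homological framework of Section 3 is in place and identifies exactly which cohomology group obstructs the symmetrization.
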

\begin{proof} Consider the commutative diagram with exact rows
	\[\xymatrix{ \mathbb{F}_2 \ar[r] \ar[d] & E_H^G(\mathbb{F}_2) \ar[r]^{\delta} \ar[d]\ar[r] & \mathrm{H}^{1}(G,\mathcal{F}_H^G(\mathbb{F}_2))  \ar[d]   \\  (\mathrm{coind}_H^G(\mathbb{F}_2))^H \ar[r]^{\alpha \ \ \ \ \ \ \ }  &  \Big( \mathrm{coind}_e^G(\mathbb{F}_2)/\mathcal{F}_H^G(\mathbb{F}_2) \Big)^H \ar[r]  & \mathrm{H}^{1}(H,\mathcal{F}_H^G(\mathbb{F}_2))   }	\]
	where the vertical maps are restriction maps induced by the inclusion $H\subseteq G$. The double coset formula says that $\mathcal{F}_H^G(\mathbb{F}_2)$ restricted to $H$ is isomorphic to
	\[   \bigoplus_{g\in [H \backslash G / H]} \mathrm{ind}_{H\cap gHg^{-1}}^{H} \Big(\mathrm{res}_{H\cap gHg^{-1}}^{gHg^{-1}} \big(g\cdot \mathrm{coind}_e^H (\mathbb{F}_2)\big)\Big) .  \]

	Since $\mathrm{N}_G[H]=G$, $H \cap gHg^{-1}$ has finite index in $H$ and $gHg^{-1}$ for all $g\in G$. If follows that, as a $H$-module, $\mathcal{F}_H^G(\mathbb{F}_2)$ is isomorphic to a direct sum of modules coinduced up from the trivial group. Since $H$ is finitely generated, this implies that $\mathrm{H}^{1}(H,\mathcal{F}_H^G(\mathbb{F}_2))=0 $. As $\tilde{e}(G,H)\geq 2 $, there exist an element $B$ of $E_H^G(\mathbb{F}_2)$ such that $\delta(B)\neq 0$, i.e. $B$ is a proper $H$-almost invariant subset of $G$. By the diagram above, we conclude that $B$ must be contained in the image of $\alpha$, which means that $BH=B$.
	
\end{proof}
In the next section, we will apply this lemma. However, due to the presence of torsion in our groups, we will be forced to consider cohomology with coefficient over the rationals. We therefore need to argue that one can also compute $\tilde{e}(G,H)$ over $\mathbb{Q}$ instead of over $\mathbb{F}_2$. To show this we follow the approach of Swan in \cite[Lemma 3.6]{Swan} where this is proven for $H=\{e\}$. \\

Fix a commutative ring $R$ with unit $1 \neq 0$ and a finite generating set $\{\sigma_1,\ldots,\sigma_d\}$ for $G$. 
\begin{lemma}\label{tech lemma}
	Let $F$ be the $R$-module of set maps $f$ from $G$ to $R$ turned into a right $G$-module via $(f\cdot g)(x)=f(x\cdot g^{-1})$. Let $\Psi$ be the subspace of $F$ consisting of those maps from $G$ to $R$ that take non-zero values in at most finitely many right cosets $Hg$ of $H$ in $G$ and let $\Theta$ be the subspace of $F$ consisting of all maps for which there exists a finite union $S=\bigcup_{j=1}^mHg_j$ such that $f( x \sigma_i)=f(x)$ for all $x \in G\setminus S$ and all $i\in \{1,\ldots,d\}$. Then  $$    E_H^G(R) \cong \Theta/\Psi$$ as $R$-modules.
	
\end{lemma}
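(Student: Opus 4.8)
The plan is to identify $F$ and $\Psi$ with the two concrete modules occurring in the definition of $E_H^G(R)$, and then to describe the $G$-fixed points of the quotient by means of the finite generating set. First, $F=\mathrm{coind}_e^G(R)=\mathrm{Hom}_R(R[G],R)$ carries exactly the right $G$-action displayed in the lemma. Choosing coset representatives for $H\backslash G$ and using transitivity of coinduction $\mathrm{coind}_H^G(\mathrm{coind}_e^H(R))=\mathrm{coind}_e^G(R)$, the natural map $\mathcal{F}_H^G(R)=\mathrm{ind}_H^G(\mathrm{coind}_e^H(R))\to\mathrm{coind}_e^G(R)$ carries the summand attached to a coset $Hg$ isomorphically onto the set of maps supported on $Hg$; since an element of the induced module is a finite sum of such, its image is precisely the submodule of maps vanishing off finitely many right cosets, which is $\Psi$. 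This is the $R$-linear generalization of the identification of $\mathcal{F}_H^G(\mathbb{F}_2)$ with the $H$-finite subsets recalled above for the field $\mathbb{F}_2$. Hence $\mathcal{F}_H^G(R)=\Psi$ and, by definition, $E_H^G(R)=(F/\Psi)^G$.

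It therefore suffices to prove $(F/\Psi)^G=\Theta/\Psi$. Both $\Psi$ and $\Theta$ are $R$-submodules of $F$, and $\Psi\subseteq\Theta$: if $f$ vanishes off a finite union $T$ of cosets, then $f$ satisfies the defining condition of $\Theta$ with $S=T\cup\bigcup_iT\sigma_i^{-1}$, again a finite union of right cosets. The key observation is that a class $f+\Psi$ is $G$-fixed precisely when $f\cdot g-f\in\Psi$ for all $g\in G$, and that the set $\{g\in G\mid f\cdot g-f\in\Psi\}$ is a subgroup of $G$. The latter follows from the cocycle identities $f\cdot(gh)-f=(f\cdot g-f)\cdot h+(f\cdot h-f)$ and $f\cdot g^{-1}-f=-(f\cdot g-f)\cdot g^{-1}$ together with the fact that $\Psi$ is a right $G$-submodule. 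Consequently $f+\Psi$ is fixed if and only if $f\cdot\sigma_i^{-1}-f\in\Psi$ for every $i$, because $\{\sigma_1^{-1},\dots,\sigma_d^{-1}\}$ generates $G$.

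Finally I would match this with the definition of $\Theta$. Writing out the action, $(f\cdot\sigma_i^{-1}-f)(x)=f(x\sigma_i)-f(x)$, so $f\cdot\sigma_i^{-1}-f\in\Psi$ says exactly that $f(x\sigma_i)=f(x)$ for all $x$ outside some finite union of right cosets, and taking the union of these finitely many exceptional sets over $i$ produces the single set $S$ in the definition of $\Theta$. Thus $\Theta=\{f\in F\mid f\cdot\sigma_i^{-1}-f\in\Psi\text{ for all }i\}$ is the preimage in $F$ of $(F/\Psi)^G$, giving $E_H^G(R)=(F/\Psi)^G=\Theta/\Psi$ as $R$-modules.

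The step I expect to require the most care is the identification $\mathcal{F}_H^G(R)=\Psi$: one must check that the natural transformation from the induced to the coinduced module is injective with image exactly the coset-finitely-supported maps, which is where the infinite index of $H$ (so that induction and coinduction genuinely differ) enters. The remaining content is the subgroup trick, whose only pitfall is the bookkeeping — distinguishing multiplication by $\sigma_i$ from $\sigma_i^{-1}$ and using that a right translate of a finite union of right cosets is again such a union — but, since both $\{\sigma_i\}$ and $\{\sigma_i^{-1}\}$ generate $G$ and the exceptional coset-set may be enlarged freely, these discrepancies do not affect the conclusion; notably, no finiteness of $H$ beyond that of the generating set of $G$ is needed here.
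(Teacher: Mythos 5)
Your proof is correct and takes essentially the same route as the paper: identify $\mathcal{F}_H^G(R)$ with $\Psi$ inside $\mathrm{coind}_e^G(R)\cong F$ (via the inversion twist, so that the image of each pure tensor is supported on a single right coset $Hg$), giving $E_H^G(R)=(F/\Psi)^G$, and then show $(F/\Psi)^G=\Theta/\Psi$ using that the $\sigma_i$ generate $G$. Your cocycle/subgroup argument simply makes explicit the step the paper dispatches with ``one verifies \ldots\ the lemma now follows easily,'' and your remaining bookkeeping (the $\sigma_i$ versus $\sigma_i^{-1}$ discrepancy, $\Psi$ being a right $G$-submodule, taking the union of the finitely many exceptional coset sets) is exactly what that verification requires.
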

\begin{proof}
	
Take $f \in \Psi$ and let $S$ be a finite union of right cosets of $H$ such that $f(x)=0$ for all $x \in G \setminus S$. Letting $S'=S\cup \bigcup_{i=1}^d S \sigma_i^{-1}$, one sees that $f(x)=f(x\sigma_i)=0$ for all $x \in G\setminus S'$ and all $i\in \{1,\ldots,d\}$. Indeed, if $f(x\sigma_i)\neq 0$ then $x\sigma_i \in S$ and hence $x\in S\sigma_i^{-1}$. This proves that $\Psi\subseteq \Theta$. Explicitly, the natural inclusion of right $G$-modules
of $\mathcal{F}_H^G(R)$ into  $\mathrm{coind}_e^G (R)$ is given by

	\[  \mathrm{Hom}_R(R[H],R)\otimes_{R[H]}R[G]\rightarrow  \mathrm{Hom}_R(R[G],R): f \otimes g \mapsto \tilde{f}    \]
	where $\tilde{f}(x)=f(gx)$ if $x\in g^{-1}H$ and $\tilde{f}(x)=0$ otherwise. Composing this inclusion with the isomorphism of right $G$-modules
		\[   \mathrm{Hom}_R(R[G],R) \xrightarrow{\cong} F : f  \mapsto \hat{f}    \]
		where $\hat{f}(g)=f(g^{-1})$ for $g\in G$, gives the inclusion
		\[  	  \mathrm{Hom}_R(R[H],R)\otimes_{R[H]}R[G]\rightarrow  F: f \otimes g \mapsto \overline{f}    \]     
	where $\overline{f}(x)=f(gx^{-1})$ if $x\in Hg$ and $\overline{f}(x)=0$ otherwise. This shows that $\mathcal{F}_H^G(R)\cong \Psi $ and hence $E_H^G(R)\cong (F/\Psi)^G$. Using the fact that $\{\sigma_1,\ldots,\sigma_d\}$ is a set of generators for $G$, one verifies that $\Theta$ coincides with the subspace of $F$ consisting of all maps such that for each $g \in G$ there exists a finite union $S=\bigcup_{j=1}^mHg_j$ such that $f( x g)=f(x)$ for all $x \in G\setminus S$. The lemma now follows easily.

\end{proof}

Recall that the rank $\mathrm{rk}(M)$ of $R$-module $M$ is the smallest positive integer $n$ such that $R^n$ admits a surjection onto $M$. If no such integer exists then the rank is infinite.
\begin{proposition} For any commutative ring $R$ with unit $1 \neq 0$, one has
	\[   \tilde{e}(G,H)= \mathrm{rk}\Big(E_H^G(R)\Big) . \]
	
\end{proposition}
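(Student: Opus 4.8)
The plan is to use the preceding lemma to replace $E_H^G(R)$ by the concrete module $\Theta/\Psi$, and then to show that the $R$-rank of $\Theta/\Psi$ is a purely combinatorial quantity that does not depend on the coefficient ring $R$; specializing to $R=\mathbb{F}_2$ then identifies this quantity with $\tilde{e}(G,H)$. Concretely, since $E_H^G(R)\cong \Theta/\Psi$ for \emph{every} commutative ring $R$ with $1\neq 0$, and in particular $\tilde{e}(G,H)=\dim_{\mathbb{F}_2}(\Theta/\Psi)$ when $R=\mathbb{F}_2$, it suffices to prove that $\Theta/\Psi$ is a free $R$-module whose rank is the same for all $R$. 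This is exactly the structure of Swan's argument in the case $H=\{e\}$, which I would adapt.

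I would make this rank explicit through the ``ends'' of $G$ relative to $H$. An element $f\in\Theta$ satisfies $f(x\sigma_i)=f(x)$ for all generators $\sigma_i$ and all $x$ outside some $H$-finite set $S_f$. Viewing $G$ as the vertex set of the Cayley graph determined by $\{\sigma_1,\dots,\sigma_d\}$, this says that $f$ is constant on each connected component of the subgraph induced on $G\setminus S_f$. Following Swan, I would first establish that the number $n$ of those components which are \emph{not} $H$-finite stabilizes as $S_f$ is enlarged through $H$-finite sets, and that this stable value $n$ is an invariant of the pair $(G,H)$ together with the chosen generating set; in particular it is defined with no reference to $R$. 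Taking $S$ large enough to realise this stable value, I would then define a homomorphism $\lambda\colon \Theta/\Psi\to R^{n}$ sending the class of $f$ to the tuple of eventual values of $f$ on the $n$ non-$H$-finite components.

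The heart of the matter is to prove that $\lambda$ is an isomorphism. Injectivity amounts to showing that an $f\in\Theta$ all of whose eventual values vanish is supported on an $H$-finite set, hence lies in $\Psi$; surjectivity amounts to producing, for each of the $n$ components, a function in $\Theta$ that is $1$ on that component and $0$ elsewhere, which one does by taking the indicator function of the component and checking that its coboundary is $H$-finitely supported. Granting this, $\Theta/\Psi\cong R^{n}$, so $\mathrm{rk}\big(E_H^G(R)\big)=n=\tilde{e}(G,H)$ for every $R$, as claimed. The main obstacle, and the place where the argument genuinely differs from the case $H=\{e\}$, is precisely this component analysis: because $H$ is infinite, the exceptional set $S_f$ is an infinite union of cosets rather than a finite set, so removing it need not leave only finitely many components, and the $H$-finite components need not assemble into a single $H$-finite set. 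Controlling this behaviour, so that ``the eventual value on each relative end'' is well defined and $\ker\lambda$ is exactly $\Psi$, is where the hypothesis that $H$ is finitely generated, together with the double-coset decomposition used earlier, must be exploited carefully.
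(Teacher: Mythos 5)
Your reduction of $E_H^G(R)$ to $\Theta/\Psi$ via the technical lemma is exactly the paper's first step, and your picture of elements of $\Theta$ as functions eventually constant on the components of the Cayley graph minus an $H$-finite set is also the right one. The gap is the claim that the number of non-$H$-finite components \emph{stabilizes} at a finite value $n$ as the exceptional set is enlarged, so that $\Theta/\Psi\cong R^{n}$. Nothing in the hypotheses forces $\tilde{e}(G,H)<\infty$: the relative number of ends can be infinite, and in that situation there is no stable value --- enlarging $S$ can keep creating new non-$H$-finite components forever, and for a single finite union of cosets $S$ the set of non-$H$-finite components may already be infinite (the graph-theoretic finiteness statement only bounds the number of $H$-\emph{orbits} of components, not the number of components). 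Since the proposition asserts an equality of possibly infinite ranks, a proof must cover this case, and your map $\lambda\colon \Theta/\Psi\to R^{n}$ simply does not exist there; moreover your intermediate reduction, that $\Theta/\Psi$ is a \emph{free} $R$-module of rank independent of $R$, is false in the infinite case, since a product $\prod_{c\in C}R$ over an infinite set $C$ occurs as a direct summand and such products are typically not free (already $\prod_{\mathbb{N}}\mathbb{Z}$ is not free abelian).

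The paper instead builds an exhaustion $S_0\subset S_1\subset\cdots$ of $G$ by finite unions of right cosets, absorbing at each stage all $H$-finite components into $S_{n+1}$; this absorption is legitimate precisely because of the finiteness fact you would also need for your injectivity step, namely that $\Gamma\setminus S$ has only finitely many $H$-orbits of components (this comes from a connectivity argument in the Cayley graph, not from the double-coset decomposition you invoke, which the paper uses only in Lemma 3.4), together with the observation that the $H$-orbit of an $H$-finite component is covered by the same finitely many cosets. With the component maps $C_{n+1}\to C_n$ surjective by construction, one gets split injections $E_H^G(R)_n\hookrightarrow E_H^G(R)_{n+1}$ of the modules of \emph{all} set maps $C_n\to R$, and an isomorphism $\lim_{\rightarrow}E_H^G(R)_n\cong\Theta/\Psi$. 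The conclusion is then a two-case analysis: if some $C_k$ is infinite, then both $\tilde{e}(G,H)$ and $\mathrm{rk}_R\big(E_H^G(R)\big)$ are infinite for every $R$; if every $C_n$ is finite, then $E_H^G(R)\cong R\otimes_{\mathbb{Z}}E_H^G(\mathbb{Z})$ with $E_H^G(\mathbb{Z})$ free abelian of (possibly countably infinite) rank, so the rank is independent of $R$. Your outline is essentially the second case with the additional unjustified assumption that the limit is attained at a finite stage; to repair it you would need both the direct-limit formalism and the treatment of the infinite case.
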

\begin{proof}Let $\Gamma$ be the Cayley graph of $G$ for the generating set $\{\sigma_1,\ldots,\sigma_d\}$ such that $G$ acts on $\Gamma$ from the left, i.e. the vertices of $\Gamma$ are the elements of $G$ and $g_1,g_2\in G$ are connected by an edge iff $g_1\sigma_{i}^{\pm 1}=g_2$ for some $i\in \{1,\ldots, d\}$. Since $\{\sigma_1,\ldots,\sigma_d\}$ generates $G$, it follows that $\Gamma$ is connected. Let $S=\bigcup_{i=1}^n Hg_i$ be a union of finitely many right cosets of $H$ in $G$. By $\Gamma \setminus S$ we mean the subgraph of $\Gamma$ spanned by the vertices $G\setminus S$. Since $S$ is $H$-invariant, $H$ acts on $\Gamma \setminus S$ . We will be interested in the connected components of $\Gamma \setminus S$ . Fix a vertex $x_0$ of $\Gamma \setminus S$. Given $x \in \Gamma \setminus S$, there exists a path $\gamma$ in $\Gamma$ from $x_0$ to $x$. If $\gamma$ does not cross $S$ then $x$ and $x_0$ are in the same connected component of $\Gamma \setminus S$. If $\gamma$ does cross $S$ then there exists a vertex $s \in S$ and a generator $\sigma_i$ such that $s\sigma_i^{\pm 1}$ is vertex of $\gamma$ and the subpath of $\gamma$ going from $s\sigma_i^{\pm 1}$ to $x$ does not cross $S$. Hence $s\sigma_{i}^{\pm 1}$ and $x$ are contained in the same connected component of $\Gamma \setminus S$. It follows that every connected component of $\Gamma\setminus S$ contains either $x_0$ or an element of the form $hg_j\sigma_{i}^{\pm 1}$ for some $j \in \{1,\ldots,m\}$ and some $i \in \{1\ldots ,d\}$. We conclude that there are only finitely many $H$-orbits of connected components of $\Gamma \setminus S$.
	
Enumerate the elements of $G=\{e=g_1,g_2,g_3,\ldots\}$ and let $S_{0}=\emptyset$. If $S_n$ is defined, then let $S_{n+1}'=S_n \cup \{Hg_{n+1}\}$ and define $S_{n+1}$ to be the union of $S'_{n+1}$ with all the  
	connected components of $\Gamma \setminus S'_{n+1}$ that can be covered by finitely many right cosets of $H$ in $G$. Note that every $S_n$ is a finite union of right cosets of $H$ in $G$, that $S_{n} \subset S_{n+1}$ and $G=\bigcup_{n\geq 1} S_n$. Now let $C_n$ be the set of components of $\Gamma\setminus S_n$. Since each component of $\Gamma \setminus S_{n+1}$ is contained in a unique component of $G \setminus S_{n}$, we obtain a map $\alpha_{n+1}:C_{n+1}\rightarrow C_n$. By construction, no component of $\Gamma \setminus S_n$ can be covered by finitely many right cosets of $H$ in $G$, so it follows that the map $\alpha_{n+1}:C_{n+1}\rightarrow C_{n}$ is surjective. Indeed, let $X$ be a component of $\Gamma \setminus S_n$. We know that $X$ cannot be covered by finitely many right cosets of $H$ in $G$. If $Hg_{n+1}$ does not intersect $X$, then $X$ is a connected component of $\Gamma \setminus S_{n+1}'$ that cannot be covered by finitely many right cosets of $H$ in $G$. Therefore $X \in C_{n+1}$ and $\alpha_{n+1}(X)=X$. Now suppose that $X$ intersects with $Hg_{n+1}$ and consider the connected components of $X \setminus (Hg_{n+1}\cap X)$. Suppose all these connected components could be covered by finitely many right cosets of $H$ in $G$. Since there are only finitely many $H$-orbits of connected components in $\Gamma \setminus S'_{n+1}$, it follows that $X$ can by covered by finitely many right cosets of $H$ in $G$, which is a contradiction. Hence there must exist a connected component $X_0$ that cannot be finitely covered. By construction, $X_0$ will be a subset of an element of $C_{n+1}$ that maps to $X$ under $\alpha_{n+1}$.
	
	Now let $E_H^G(R)_n$ be the $R$-module of set maps from $C_n$ to $R$. The surjection $C_{n+1}\rightarrow C_n$ defines a monomorphism 
	\[   E_H^G(R)_n\rightarrow E_H^G(R)_{n+1}.      \]
	onto a direct summand of $ E_H^G(R)_{n+1}$. 
	Define for each $n\geq 1$ the map 
	\[\varphi_n : E_H^G(R)_n \rightarrow \Theta/\Psi: f \mapsto \tilde{f}+\Psi\]
	where $\tilde{f}(x)=f(c)$ if $x$ lies in the component $c$ of $\Gamma \setminus S_n$ and $\tilde{f}(x)=0$ if $x \in S_n$.  To see that $\tilde{f} \in \Theta$,  set $S=S_n\cup \bigcup_{i=1}^d S_n\sigma^{-1}_i$ and pick $x \in G\setminus S$. Since $x$ and $x\sigma_i$ are connected by an edge in $\Gamma$, the only way $x$ and $x\sigma_i$ can not be in the same component of $\Gamma \setminus S_n$ is if $x\sigma_i \in S_n$, but then $x \in S_{n}\sigma^{-1}_i$, which is not possible. We conclude that $\tilde{f}(x\sigma_i)=\tilde{f}(x)$ for all $x \in S$ and all $i \in \{1,\ldots,d\}$. One checks that the maps $\varphi_n$ assemble to form an $R$-module  map
	\[   \varphi: \lim_{\rightarrow}  E_H^G(R)_n \rightarrow \Theta/\Psi. \]
	We claim that $\varphi$ is an isomorphism. Indeed, take $f \in \Theta$ and let $S$ be a finite union of right cosets of $H$ in $G$ such that $f( x \sigma_i)=f(x)$ for all $x \in G\setminus S$ and all $i\in \{1,\ldots,d\}$. Then $S\subseteq S_n$ for $n$ large enough, implying that $f$ is constant on the connected components of $\Gamma \setminus S_n$. Hence $f \in E_H^G(R)_n$, proving that $\varphi$ is surjective. Now suppose that $f \in E_H^G(R)_n$ and that $\varphi_n(f)=0$. Then $\tilde{f}(x)=0$ for all $x \in G \setminus S$, where $S$ is a finite union of right cosets of $H$ in $G$.
	Since $S \subseteq S_m$ for some $m\geq n$, it follows that $f \rightarrow 0$ in $E_H^G(R)_{m}$, proving that $\varphi$ is injective. This proves our claim.

	We now consider two cases. First suppose there there exists a $k\geq 0$ such that $C_k$ is infinite. In this case $$\mathrm{rk}_R(E_{H}^G(R)_k)=\mathrm{rk}_R(\prod_{c \in C_k}R)$$ is infinite. Since $E_H^G(R)_k$ is a direct summand of $$\lim_{\rightarrow}  E_H^G(R)_n\cong \Theta/\Psi \cong E_H^G(R)$$ by the above and Lemma \ref{tech lemma}, we conclude that $\mathrm{rk}_R(E_H^G(R))=\infty$ for any commutative ring $R$ with unit $1\neq 0$. This holds in particular for $R=\mathbb{F}_2$, so $\tilde{e}(G,H)=\infty$. Secondly, assume that $C_n$ is finite for all $n\geq 0$. In this case one has $$E_H^G(R)_n \cong R \otimes_{\mathbb{Z}} E_H^G(\mathbb{Z})_n.$$ We conclude by taking limits and Lemma \ref{tech lemma} that 
	$E_H^G(R)=R\otimes_{\mathbb{Z}} E_H^G(\mathbb{Z})$.
	Since  $E_H^G(\mathbb{Z})_{n+1}\cong E_H^G(\mathbb{Z})_{n} \oplus X_{n+1}$ where $X_{n+1}$ is a finitely generated free abelian group, it follows that $E_H^G(\mathbb{Z})$ is a free abelian group of countable rank. We conclude that $$\mathrm{rk}_R(E_H^G(R))=\mathrm{rk}_{\mathbb{Z}}(E_H^G(\mathbb{Z}))$$ for any commutative ring $R$ with unit $1\neq 0$. This holds in particular for $R=\mathbb{F}_2$, so $$\tilde{e}(G,H)=\mathrm{rk}_R(E_H^G(R))$$ as desired.
	
\end{proof}

\begin{corollary}\label{cor: rat end} One has
	\[   \tilde{e}(G,H)=\dim_{\mathbb{Q}}\Big(\mathrm{H}^1(G,\mathcal{F}_H^G(\mathbb{Q}))\Big)+1 . \]
	
\end{corollary}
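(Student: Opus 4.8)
The plan is to specialize the preceding Proposition and the short exact sequence~(\ref{eq: ends exact}) to the coefficient ring $R=\mathbb{Q}$, where ranks of modules become honest vector-space dimensions.

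First I would invoke the Proposition just proved with $R=\mathbb{Q}$ to obtain $\tilde{e}(G,H)=\mathrm{rk}\big(E_H^G(\mathbb{Q})\big)$. Since $\mathbb{Q}$ is a field, every $\mathbb{Q}$-module is a vector space, and the rank of a $\mathbb{Q}$-vector space---the least $n$ for which $\mathbb{Q}^n$ surjects onto it---coincides with its dimension (both being infinite exactly when the other is). Hence $\tilde{e}(G,H)=\dim_{\mathbb{Q}}\big(E_H^G(\mathbb{Q})\big)$.

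Next I would read off the exact sequence~(\ref{eq: ends exact}) with $R=\mathbb{Q}$,
\[ 0 \rightarrow \mathbb{Q} \rightarrow E_H^G(\mathbb{Q}) \xrightarrow{\delta} \mathrm{H}^1(G,\mathcal{F}_H^G(\mathbb{Q})) \rightarrow 0, \]
which is a short exact sequence of $\mathbb{Q}$-vector spaces. Additivity of dimension along such a sequence gives $\dim_{\mathbb{Q}}\big(E_H^G(\mathbb{Q})\big)=1+\dim_{\mathbb{Q}}\big(\mathrm{H}^1(G,\mathcal{F}_H^G(\mathbb{Q}))\big)$, and combining this with the identity of the previous paragraph yields the claimed formula. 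There is essentially no obstacle remaining at this stage: the substantive content---that $\mathrm{rk}\big(E_H^G(R)\big)$ is independent of the choice of $R$ and computes $\tilde{e}(G,H)$---has already been established in the Proposition, so the corollary is merely the observation that working over the field $\mathbb{Q}$ turns this rank into a dimension to which the exact sequence~(\ref{eq: ends exact}) applies additively.
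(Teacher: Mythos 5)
Your proposal is correct and matches the paper's proof, which simply states that the corollary ``follows immediately from the proposition and (\ref{eq: ends exact})'': you apply the Proposition with $R=\mathbb{Q}$, identify rank with $\mathbb{Q}$-dimension, and use additivity of dimension along the short exact sequence. Your spelled-out version fills in exactly the routine details the paper leaves implicit (including the harmless infinite-dimensional case), with no deviation in method.
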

\begin{proof} This follows immediately from the proposition and (\ref{eq: ends exact}).
\end{proof}
\section{Structure of commensurators}
A crucial ingredient for proving our main theorem is the following variation on a result due to Kropholler.
\begin{theorem}\label{th: kropholler} If a finitely generated group $G$ has rational cohomological dimension two and has an infinite virtually cyclic subgroup $H$ such that $G=\mathrm{N}_G[H]$, then $G$ splits over a virtually cyclic subgroup commensurable with $H$. In particular, $G$ contains $F_2$ or a  Baumslag-Solitar group $\mathrm{BS}(1,m)$ with $m \in \mathbb{Z}\setminus \{0\}$.

\end{theorem}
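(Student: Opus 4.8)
The plan is to realize the splitting as a $G$-action on a tree obtained from a proper almost invariant subset, following the Kropholler-Roller philosophy but with care taken to accommodate torsion in $G$. The guiding idea is that the hypotheses $\mathrm{cd}_{\mathbb{Q}}(G)=2$ and $G=\mathrm{N}_G[H]$ are exactly what is needed to produce a proper $H$-almost invariant set, and that such a set yields a splitting over a subgroup commensurable with $H$.

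\textbf{Step 1: Produce a proper almost invariant set over $\mathbb{Q}$.} First I would show that $\tilde{e}(G,H)\geq 2$. By Corollary \ref{cor: rat end} this is equivalent to $\mathrm{H}^1(G,\mathcal{F}_H^G(\mathbb{Q}))\neq 0$. Here I would exploit the short exact sequence
\[ 0\rightarrow \mathcal{F}_H^G(\mathbb{Q}) \rightarrow \mathrm{coind}_e^G(\mathbb{Q}) \rightarrow \mathrm{coind}_e^G(\mathbb{Q})/\mathcal{F}_H^G(\mathbb{Q}) \rightarrow 0 \]
together with the fact that $\mathcal{F}_H^G(\mathbb{Q})=\mathrm{ind}_H^G(\mathrm{coind}_e^H(\mathbb{Q}))$ is a (co)induced module, so that its higher cohomology is governed via Shapiro's lemma by the cohomology of $H$, which is virtually cyclic. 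Comparing the cohomology of $G$ (which has rational cohomological dimension exactly $2$, so $\mathrm{H}^2(G,-)$ is nontrivial but $\mathrm{H}^3$ vanishes) against that of the coinduced modules should force $\mathrm{H}^1(G,\mathcal{F}_H^G(\mathbb{Q}))$ to be nonzero. The key leverage is that $H=\mathrm{N}_G[H]$ means every conjugate $H^g$ is commensurable with $H$, so the double coset decomposition of $\mathcal{F}_H^G(\mathbb{Q})$ restricted to $H$ consists of modules coinduced from finite (hence rationally trivial-cohomology) groups, killing $\mathrm{H}^1(H,\mathcal{F}_H^G(\mathbb{Q}))$ exactly as in Lemma \ref{lemma : almost invariant}.

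\textbf{Step 2: Extract an $H$-invariant almost invariant set.} Once $\tilde{e}(G,H)\geq 2$ is established, I would run the rational analogue of Lemma \ref{lemma : almost invariant}: the same commutative diagram of restriction maps, now with $\mathbb{Q}$-coefficients, shows that a nonzero element $B\in E_H^G(\mathbb{Q})$ with $\delta(B)\neq 0$ lies in the image of the horizontal map $\alpha$ because $\mathrm{H}^1(H,\mathcal{F}_H^G(\mathbb{Q}))=0$, which forces the corresponding almost invariant ``set'' to be right $H$-invariant, i.e.\ the $\mathbb{Q}$-analogue of $BH=B$. The finite generation of $H$ (as a virtually cyclic group) is what makes this vanishing work.

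\textbf{Step 3: Build the tree and read off the splitting.} With a proper $H$-invariant $H$-almost invariant set in hand, I would invoke the standard Bass-Serre/Dunwoody machinery: the translates of $B$ under $G$, partially ordered by inclusion-up-to-$H$-finite-difference, generate a $G$-tree on which $G$ acts without inversions and with a single orbit of edges, the edge stabilizer being commensurable with $H$. Because $B$ is proper, the action is nontrivial (no global fixed point), so $G$ splits as either an amalgamated free product or an HNN-extension over a subgroup commensurable with $H$, hence over an infinite virtually cyclic group. The final sentence of the theorem then follows from the structure of such splittings: an infinite virtually cyclic edge group with the given commensurator condition forces either an HNN-extension realizing a Baumslag-Solitar group $\mathrm{BS}(1,m)$ (when the two inclusions of the edge group into the vertex group are commensurable but of different index) or an embedded $F_2$ (when the splitting is ``sufficiently nonabelian''), using Lemma \ref{lem: basic} in the contrapositive direction as a sanity check.

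\textbf{Main obstacle.} I expect the principal difficulty to be Step 1: establishing $\mathrm{H}^1(G,\mathcal{F}_H^G(\mathbb{Q}))\neq 0$. Working over $\mathbb{Q}$ rather than $\mathbb{F}_2$ is essential because of torsion, but it means I cannot directly quote Kropholler's original $\mathbb{F}_2$-coefficient arguments; instead I must carefully track how the rational cohomological dimension bound $\mathrm{cd}_{\mathbb{Q}}(G)=2$ interacts with the coinduced modules, and ensure the relevant connecting maps in the long exact sequence are genuinely nonzero rather than merely not obviously zero. A delicate point is controlling $\mathrm{H}^2(G,\mathrm{coind}_e^G(\mathbb{Q}))$: if this vanished we would immediately get $\mathrm{H}^1$ of the quotient surjecting onto $\mathrm{H}^2(G,\mathcal{F}_H^G(\mathbb{Q}))$, but one must verify the boundary map feeding into $\mathrm{H}^1(G,\mathcal{F}_H^G(\mathbb{Q}))$ does not vanish, which is where the precise value (not merely the upper bound) of the rational cohomological dimension and the one-endedness input from Lemma \ref{lemma: ration dim} become indispensable.
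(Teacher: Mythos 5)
There is a genuine gap, and it sits exactly where you predicted: Step 1. You correctly isolate $\tilde{e}(G,H)\geq 2$, i.e.\ $\mathrm{H}^1(G,\mathcal{F}_H^G(\mathbb{Q}))\neq 0$, as the crux, but the mechanism you propose cannot deliver it. Note first that $\mathcal{F}_H^G(\mathbb{Q})=\mathrm{ind}_H^G(\mathrm{coind}_e^H(\mathbb{Q}))$ is an \emph{induced} module, not a coinduced one, so Shapiro's lemma does not compute its $G$-cohomology from $H$; and in the long exact sequence of $0\to \mathcal{F}_H^G(\mathbb{Q})\to \mathrm{coind}_e^G(\mathbb{Q})\to Q\to 0$, the only map landing in $\mathrm{H}^1(G,\mathcal{F}_H^G(\mathbb{Q}))$ is the connecting map from $Q^G=E_H^G(\mathbb{Q})$, whose nontriviality beyond the constants is \emph{precisely} the statement $\tilde{e}(G,H)\geq 2$ you are trying to prove --- the sequence gives no independent leverage, and $\mathrm{cd}_{\mathbb{Q}}(G)=2$ together with one-endedness (Lemma \ref{lemma: ration dim}) is a priori consistent with that connecting map having image only $\mathbb{Q}$. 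The paper's actual engine has three moves absent from your sketch: (i) Strebel's criterion together with the fact that $\mathbb{Q}[H]\setminus\{0\}$ is an Ore set in $\mathbb{Q}[G]$ --- this is where $G=\mathrm{N}_G[H]$ enters essentially, not merely via the double coset formula --- shows $G$ is $FP_2$ over $\mathbb{Q}$ and a $2$-dimensional \emph{duality group} over $\mathbb{Q}$ with dualising module $D_G=\mathrm{H}^2(G,\mathbb{Q}[G])\neq 0$; (ii) with $K$ the fraction field of $\mathbb{Q}[H]$, the module $K\otimes_{\mathbb{Q}[H]}\mathbb{Q}[G]$ is divisible and torsion-free over $H$, whence $\mathrm{H}^2(G,K\otimes_{\mathbb{Q}[H]}\mathbb{Q}[G])=0$, and $FP_2$ over $\mathbb{Q}$ then gives $K\otimes_{\mathbb{Q}[H]}D_G=0$, producing a nonzero $\mathbb{Q}[H]$-submodule $M\subseteq D_G$ of finite $\mathbb{Q}$-dimension; (iii) duality, homological Shapiro (which \emph{does} apply to induced modules), and Poincar\'e duality for $H$ yield $\mathrm{H}^1(G,\mathcal{F}_H^G(\mathbb{Q}))\cong (\mathrm{coind}_e^H(\mathbb{Q})\otimes_{\mathbb{Q}} D_G)^H$, which contains a copy of $M\neq 0$. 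Without the duality-group structure there is no known way to access the cohomology of the induced module, so your plan as stated stalls at its first step.

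Two smaller corrections. In Step 2 you propose a ``rational analogue'' of Lemma \ref{lemma : almost invariant} producing a $\mathbb{Q}$-version of $BH=B$; but the tree construction needs an honest \emph{subset} of $G$. The paper's route is to use the ring-independence of $\tilde{e}(G,H)$ (the Proposition preceding Corollary \ref{cor: rat end}) to transfer the rational nonvanishing back to $\mathbb{F}_2$, and then apply Lemma \ref{lemma : almost invariant} over $\mathbb{F}_2$ as stated. In Step 3, ``the translates of $B$ generate a $G$-tree'' is not automatic for an arbitrary proper almost invariant set --- in general that is the hard Kropholler--Roller splitting problem; it works here only because $BH=B$ and $G=\mathrm{N}_G[H]$, which is the content of Kropholler's Lemma 3.2 that the paper quotes. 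Your final dichotomy is broadly right but misattributed: $\mathrm{BS}(1,m)$ arises from the ascending HNN case (edge group equal to the vertex group), $\mathbb{Z}^2=\mathrm{BS}(1,1)$ from the amalgam with both indices $2$, and $F_2$ from the remaining cases via Britton's lemma and normal forms.
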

This theorem allows us to determine the structure of commensurators of infinite virtually cyclic groups in a group $G$ with $\underline{\underline{\mathrm{cd}}}(G)\leq 1$.
\begin{corollary}\label{cor: comm loc cyc} If $G$ is a group with $\underline{\underline{\mathrm{cd}}}(G)\leq 1$, then the commensurator $\mathrm{N}_G[H]$ of any infinite virtually cyclic subgroup $H$ of $G$ is locally virtually cyclic.
	
\end{corollary}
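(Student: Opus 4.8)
Set $N=\mathrm{N}_G[H]$ and let $S\leq N$ be an arbitrary finitely generated subgroup; the goal is to prove that $S$ is virtually cyclic. Since $H$ is infinite virtually cyclic it is finitely generated, so the plan is to first replace $S$ by the finitely generated group $\langle S,H\rangle$, which is still contained in $N$. As every subgroup of a virtually cyclic group is virtually cyclic, it suffices to handle the enlarged group, so I may assume from the outset that $H\leq S\leq N$. The purpose of this reduction is that it makes $S$ equal to the commensurator of $H$ inside itself: for any $g\in S\subseteq N=\mathrm{N}_G[H]$ one has $H^g\sim H$, whence $\mathrm{N}_S[H]=S\cap\mathrm{N}_G[H]=S$. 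Thus $S$ is a finitely generated group containing the infinite virtually cyclic subgroup $H$ with $S=\mathrm{N}_S[H]$.

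Next I would use the subgroup monotonicity $\mathrm{cd}_{\mathcal{VC}\cap S}(S)\leq\underline{\underline{\mathrm{cd}}}(G)\leq 1$ recalled in Section 2, so that $\underline{\underline{\mathrm{cd}}}(S)\leq 1$. If $\underline{\underline{\mathrm{cd}}}(S)=0$, then $S\in\mathcal{VC}$ and we are done. So suppose, toward a contradiction, that $S$ is not virtually cyclic; then necessarily $\underline{\underline{\mathrm{cd}}}(S)=1$, and Lemma \ref{lemma: ration dim} yields $\mathrm{cd}_{\mathbb{Q}}(S)=2$. Now $S$ satisfies exactly the hypotheses of Theorem \ref{th: kropholler}: it is finitely generated of rational cohomological dimension two and carries an infinite virtually cyclic subgroup $H$ with $S=\mathrm{N}_S[H]$. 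The theorem therefore forces $S$, and hence $G$, to contain a copy of $F_2$ or of a Baumslag--Solitar group $\mathrm{BS}(1,m)$ with $m\in\mathbb{Z}\setminus\{0\}$, contradicting Lemma \ref{lem: basic}(1)--(2). Consequently $S$ must be virtually cyclic.

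Since $S$ was an arbitrary finitely generated subgroup of $N$, this shows that $N=\mathrm{N}_G[H]$ is locally virtually cyclic. The only step that requires any care is the initial reduction to $H\leq S$ together with the identity $\mathrm{N}_S[H]=S$, because this is precisely what allows Theorem \ref{th: kropholler} to be applied internally to the finitely generated subgroup $S$ rather than to $N$ itself (which need not be finitely generated). The genuine mathematical weight lies entirely in Theorem \ref{th: kropholler} and in the exclusion of free and Baumslag--Solitar subgroups provided by Lemma \ref{lem: basic}; everything else is bookkeeping.
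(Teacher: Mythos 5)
Your proof is correct and follows essentially the same route as the paper: the paper's proof likewise takes a finitely generated subgroup $\Gamma$ of $\mathrm{N}_G[H]$ containing $H$, observes $\mathrm{N}_{\Gamma}[H]=\Gamma$ and $\underline{\underline{\mathrm{cd}}}(\Gamma)\leq 1$, invokes Lemma \ref{lemma: ration dim} to get $\mathrm{cd}_{\mathbb{Q}}(\Gamma)=2$, and derives a contradiction from Theorem \ref{th: kropholler} together with Lemma \ref{lem: basic}. The only difference is cosmetic: you spell out the reduction step (replacing an arbitrary finitely generated $S$ by $\langle S,H\rangle$ and checking $\mathrm{N}_S[H]=S\cap\mathrm{N}_G[H]=S$) that the paper leaves implicit.
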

\begin{proof} Let $H$ be an infinite virtually cyclic subgroup of $G$ and let $\Gamma$ be a finitely generated subgroup of $\mathrm{N}_G[H]$ that contains $H$. Then $\Gamma$ is a finitely generated group with $N_{\Gamma}[H]=\Gamma$ and $\underline{\underline{\mathrm{cd}}}(\Gamma)\leq 1$. Our aim is to show that $\Gamma$ is virtually cyclic. Assume that $\Gamma$ is not virtually cyclic. Then $\underline{\underline{\mathrm{cd}}}(\Gamma)= 1$, in which case $\mathrm{cd}_{\mathbb{Q}}(\Gamma)= 2$ by Lemma \ref{lemma: ration dim}. But then it follows from Theorem \ref{th: kropholler} that $\Gamma$ contains $F_2$ or $\mathrm{BS}(1,m)$ which implies that $\underline{\underline{\mathrm{cd}}}(\Gamma)\geq 2$ by  Lemma \ref{lem: basic}(2). This is a contradiction, so we conclude that $\Gamma$ is virtually cyclic.
	
\end{proof}
We now turn to the proof of Theorem \ref{th: kropholler}. In \cite{kropholler}, Kropholler proves that if a torsion-free finitely generated group $G$ has cohomological dimension two and has a infinite cyclic subgroup $H$ such that $G=\mathrm{N}_G[H]$, then $G$ splits over an infinite cyclic subgroup commensurable with $H$. Below we will show that Kropholler's arguments remain valid for non-torsion-free groups with rational cohomological dimension equal to two. \\

Let $G$ be a finitely generated group that has rational cohomological dimension equal to two and has an infinite cyclic subgroup $H$ such that $G=\mathrm{N}_G[H]$.

\begin{lemma}\cite[Lemma 2.2]{kropholler} The group $G$ is a $2$-dimensional duality group over $\mathbb{Q}$.
	
\end{lemma}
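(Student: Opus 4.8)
The plan is to verify the Bieri--Eckmann criterion for duality groups over $\mathbb{Q}$: a group is an $n$-dimensional duality group over $\mathbb{Q}$ precisely when it is of type $FP$ over $\mathbb{Q}$ and $\mathrm{H}^k(G,\mathbb{Q}[G])$ vanishes for all $k\neq n$, the surviving group $\mathrm{H}^n(G,\mathbb{Q}[G])$ then serving as the dualizing module (the usual torsion-freeness requirement is automatic, $\mathbb{Q}$ being a field). With $n=2$, the degrees $k\geq 3$ are handled at once by the hypothesis $\mathrm{cd}_{\mathbb{Q}}(G)=2$, and degree $k=0$ is immediate since $\mathrm{H}^0(G,\mathbb{Q}[G])=(\mathbb{Q}[G])^G=0$ as $G$ is infinite (it contains $H\cong\mathbb{Z}$). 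So the genuine content is \emph{(i)} that $G$ is of type $FP$ over $\mathbb{Q}$, and \emph{(ii)} that $\mathrm{H}^1(G,\mathbb{Q}[G])=0$.

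For \emph{(ii)} I would show that $G$ has exactly one end. Since $G$ is finitely generated and $\mathrm{cd}_{\mathbb{Q}}(G)=2$, it is neither finite nor virtually cyclic, so if it were not one-ended then by Stallings' theorem it would split over a finite subgroup; let $T$ be the associated Bass--Serre tree, on which $G$ acts with finite edge stabilizers and without a global fixed vertex. The infinite cyclic group $H$ acts on $T$ either hyperbolically or elliptically. If $H$ is hyperbolic with axis $\ell$, then every subgroup commensurable with $H$ shares the axis $\ell$, so $G=\mathrm{N}_G[H]$ stabilizes $\ell$; the resulting map to the isometry group of the line $\ell$ has kernel contained in a finite edge stabilizer, forcing $G$ to be virtually cyclic, a contradiction. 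If $H$ is elliptic it fixes a vertex $v$, and choosing $g\in G$ with $gv\neq v$ the infinite group $H\cap gHg^{-1}$ would fix the geodesic from $v$ to $gv$ and hence an edge, contradicting finiteness of edge stabilizers. Thus $G$ is one-ended, which gives $\mathrm{H}^1(G,\mathbb{Q}[G])=0$; in the language of Section~3 this is $\tilde{e}(G,e)=1$ via Corollary~\ref{cor: rat end}.

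For \emph{(i)}, which I expect to be the main obstacle, I would exploit the commensurator hypothesis structurally, running Kropholler's torsion-free argument with $\mathbb{Q}$-coefficients. The conjugation action of $G$ on the commensurability class $[H]$ yields a modular homomorphism $\lambda\colon G\to\mathbb{Q}^{\times}$ recording how each $g$ rescales $H$ inside $H\otimes\mathbb{Q}$; its kernel contains $H$ as an essentially normal subgroup, and the normal closure of $H$ is, rationally, a locally cyclic group $A$ isomorphic to a subgroup of $\mathbb{Q}$, fitting into a short exact sequence $1\to A\to G\to Q\to 1$ with $\mathrm{cd}_{\mathbb{Q}}(Q)\leq 1$. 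Although $A$ itself need not be finitely generated, as the example $A=\mathbb{Z}[1/2]$ inside $\mathrm{BS}(1,2)$ shows, this exhibits $G$ as an ascending structure built from copies of $\mathbb{Z}$, and the associated Mayer--Vietoris sequence propagates type $FP$ over $\mathbb{Q}$ from the virtually cyclic pieces to $G$. Since $\mathrm{cd}_{\mathbb{Q}}(G)=2<\infty$, type $FP_2$ over $\mathbb{Q}$ then upgrades to type $FP$.

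The delicate points, where most of the work lies, are twofold. First, one must make the modular homomorphism and the locally cyclic normal closure precise in the presence of torsion: working throughout with $\mathbb{Q}$-homology renders finite subgroups invisible and makes virtually cyclic groups behave homologically like $\mathbb{Z}$, so the task is to check that Kropholler's computations survive this passage verbatim. Second, the finiteness bookkeeping for \emph{(i)} must conclude $FP$ over $\mathbb{Q}$ from the ascending data \emph{without} invoking the splitting that Theorem~\ref{th: kropholler} itself aims to establish, so as to avoid circularity. Once \emph{(i)} and \emph{(ii)} are secured, the Bieri--Eckmann criterion yields that $G$ is a $2$-dimensional duality group over $\mathbb{Q}$ with dualizing module $\mathrm{H}^2(G,\mathbb{Q}[G])$.
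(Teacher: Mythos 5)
Your frame (Bieri--Eckmann over $\mathbb{Q}$, where torsion-freeness of the dualizing module is automatic), your degree $0$ and $\geq 3$ checks, and your part \emph{(ii)} are all fine; in particular the one-endedness argument via Stallings' theorem and the hyperbolic/elliptic dichotomy on the Bass--Serre tree is correct and is a legitimate geometric substitute for the algebraic route. The genuine gap is part \emph{(i)}: the structural claim it rests on is false. Take $G=\mathrm{BS}(2,3)=\langle t,x \mid tx^2t^{-1}=x^3\rangle$. This satisfies every hypothesis of the lemma: it is finitely generated, torsion-free of cohomological dimension two, so $\mathrm{cd}_{\mathbb{Q}}(G)=2$, and $H=\langle x\rangle$ is commensurated by all of $G$ (conjugation by $t$ carries $\langle x^2\rangle$ onto $\langle x^3\rangle$), so $G=\mathrm{N}_G[H]$. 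The modular homomorphism exists, with $\lambda(t)=3/2$ and $\lambda(x)=1$, but its kernel --- the normal closure of $x$ --- is nowhere near locally cyclic: setting $y=txt^{-1}$ one has $y^2=x^3$ while $[x,y]\neq 1$ by Britton's lemma, and in fact the normal closure contains $F_2$ (it meets any free subgroup of $G$, which exist by ping-pong, in a subgroup that cannot embed in $G/\langle\langle x\rangle\rangle\cong\mathbb{Z}$). So there is no extension $1\to A\to G\to Q\to 1$ with $A\leq\mathbb{Q}$ and $\mathrm{cd}_{\mathbb{Q}}(Q)\leq 1$, and no ``ascending structure built from copies of $\mathbb{Z}$'' for Mayer--Vietoris to run on. Worse, any Mayer--Vietoris argument presupposes a decomposition of $G$ as a graph of groups, which is exactly what Theorem \ref{th: kropholler} is trying to produce --- the circularity you flagged is real and your proposed detour around it does not exist. (Even in the honestly ascending case $\mathrm{BS}(1,2)$, the kernel $\mathbb{Z}[1/2]$ is not finitely generated, and ascending unions do not propagate $FP_2$.)

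The paper's proof obtains the finiteness homologically rather than structurally: it observes that Kropholler's proof of \cite[Lemma 2.2]{kropholler} goes through verbatim over any field. There, type $FP_2$ over $\mathbb{Q}$ comes from Strebel's finiteness criterion for groups of cohomological dimension two, fed by the key computational input that the nonzero elements of $\mathbb{Q}[H]$ form an Ore set in $\mathbb{Q}[G]$ when $G=\mathrm{N}_G[H]$ (Kropholler's Lemma 2.1, also field-independent), which yields the vanishing of $\mathrm{H}^2$ with the localized coefficients needed to verify the criterion; the vanishing $\mathrm{H}^1(G,\mathbb{Q}[G])=0$ comes out of the same circle of ideas. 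Then $FP_2$ together with $\mathrm{cd}_{\mathbb{Q}}(G)=2$ upgrades to $FP$ over $\mathbb{Q}$ (the second syzygy in a finitely generated free resolution is finitely generated projective), and your Bieri--Eckmann framing finishes exactly as you say, with dualizing module $\mathrm{H}^2(G,\mathbb{Q}[G])\neq 0$. So keep your frame and your part \emph{(ii)}, but part \emph{(i)} must be replaced by the Strebel/Ore argument; no structural description of $G$ is available at this stage of the proof.
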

\begin{proof}
Strebel's criterion mentioned in the beginning of \cite[Section 2]{kropholler} and \cite[Lemma 2.1]{kropholler} are valid over any field $k$. So even though \cite[Lemma 2.2]{kropholler} is formulated over $\mathbb{Z}$, one easily verifies that the proof carries through over any field, in particular over $\mathbb{Q}$. It follows that $G$ is a group with $\mathrm{cd}_{\mathbb{Q}}(G)=2$ that is of type $FP_2$ over $\mathbb{Q}$ and satisfies $\mathrm{H}^1(G,\mathbb{Q}[G])=0$. This implies that $G$ is a $2$-dimensional duality group over $\mathbb{Q}$ with dualising module $\mathrm{H}^2(G,\mathbb{Q}[G])\neq 0$.
\end{proof}

 Recall that an $R$-module $M$ is called torsion-free if $r\cdot m =0$ for some $m \in M$ and some non-zero $r\in R$ implies that $m=0$. Note that $\mathbb{Q}[H]$ is an integral domain and that $\mathbb{Q}[G]$ is a torsion-free $\mathbb{Q}[H]$-module.

\begin{lemma}\cite[Lemma 2.3]{kropholler} One has $\tilde{e}(G,H)\geq 2$.
	
\end{lemma}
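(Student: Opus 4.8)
The plan is to use Corollary \ref{cor: rat end} to reduce the statement to a single nonvanishing, and then to compute the relevant cohomology group by Bieri--Eckmann duality. By Corollary \ref{cor: rat end} it suffices to prove that $\mathrm{H}^1(G,\mathcal{F}_H^G(\mathbb{Q}))\neq 0$. By the previous lemma $G$ is a $2$-dimensional duality group over $\mathbb{Q}$ with dualising module $D=\mathrm{H}^2(G,\mathbb{Q}[G])\neq 0$, and $\mathrm{H}^i(G,\mathbb{Q}[G])=0$ for $i\neq 2$. Duality therefore supplies a natural isomorphism $\mathrm{H}^1(G,M)\cong\mathrm{H}_1(G,D\otimes_{\mathbb{Q}}M)$ for every $\mathbb{Q}[G]$-module $M$.

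Next I would unwind this for $M=\mathcal{F}_H^G(\mathbb{Q})=\mathrm{ind}_H^G(\mathrm{coind}_e^H(\mathbb{Q}))$. The projection formula $D\otimes_{\mathbb{Q}}\mathrm{ind}_H^G(N)\cong\mathrm{ind}_H^G(\mathrm{res}_H^G D\otimes_{\mathbb{Q}}N)$ together with Shapiro's lemma in homology reduces the computation to $\mathrm{H}_1(H,\mathrm{res}_H^G D\otimes_{\mathbb{Q}}\mathrm{coind}_e^H(\mathbb{Q}))$. Since $H$ is infinite cyclic we have $\mathrm{cd}_{\mathbb{Q}}(H)=1$ and $\mathrm{H}_1(H,-)$ is simply the functor of $H$-invariants. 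A direct inspection of the diagonal action on $\mathrm{res}_H^G D\otimes_{\mathbb{Q}}\mathrm{coind}_e^H(\mathbb{Q})$ then identifies these invariants with the $\mathbb{Q}[H]$-torsion submodule $\tau(\mathrm{res}_H^G D)$ of the restricted dualising module, the finiteness built into the tensor product forcing the relevant invariant maps $H\to D$ to have finite-dimensional image. Thus the lemma becomes equivalent to the assertion that $\mathrm{res}_H^G D$ is \emph{not} torsion-free as a module over $\mathbb{Q}[H]=\mathbb{Q}[t,t^{-1}]$.

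To detect this torsion I would exploit the fact, noted just above, that $\mathbb{Q}[G]$ is $\mathbb{Q}[H]$-torsion-free. For any nonzero $p\in\mathbb{Q}[H]$, right multiplication by $p$ is then an injective endomorphism of $\mathbb{Q}[G]$, giving a short exact sequence $0\to\mathbb{Q}[G]\xrightarrow{\cdot p}\mathbb{Q}[G]\to\mathbb{Q}[G]/\mathbb{Q}[G]p\to 0$ of left $\mathbb{Q}[G]$-modules. Feeding this into the long exact cohomology sequence and using $\mathrm{H}^1(G,\mathbb{Q}[G])=0=\mathrm{H}^3(G,\mathbb{Q}[G])$ yields $\mathrm{H}^1(G,\mathbb{Q}[G]/\mathbb{Q}[G]p)\cong\ker(p\colon D\to D)$. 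Hence $\tau(\mathrm{res}_H^G D)\neq 0$ precisely when some nonzero $p$ acts non-injectively on $D$.

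The main obstacle is this final nonvanishing: showing that $\mathrm{res}_H^G D$ genuinely carries torsion. Here the hypothesis $G=\mathrm{N}_G[H]$ is essential. First I would observe that, since every conjugate $H^g$ is commensurable with $H$, the submodule $\tau(\mathrm{res}_H^G D)$ is in fact a $\mathbb{Q}[G]$-submodule of $D$: an $H$-finite vector is carried by $g$ to an $H^g$-finite, hence $H$-finite, vector. Ruling out $\tau(\mathrm{res}_H^G D)=0$ then amounts to showing that $\mathrm{res}_H^G D$ cannot be a torsion-free $\mathbb{Q}[t,t^{-1}]$-module, and this is exactly the point at which Kropholler's argument is invoked. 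I expect the delicate part to be verifying that his reasoning survives the passage from $\mathbb{Z}$ to $\mathbb{Q}$, where $D$ is only known to be nonzero rather than of a prescribed form; the input should be the combination of the nonvanishing $D\neq 0$ with the commensurability relations among the conjugates of $t$ forced by $G=\mathrm{N}_G[H]$.
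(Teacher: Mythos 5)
Your chain of reductions is correct and, up to packaging, is the same as the paper's: Corollary \ref{cor: rat end} reduces the lemma to $\mathrm{H}^1(G,\mathcal{F}_H^G(\mathbb{Q}))\neq 0$; duality, the projection formula and Shapiro's lemma reduce that to the nonvanishing of $(\mathrm{coind}_e^H(\mathbb{Q})\otimes_{\mathbb{Q}}D)^H$ with $D=\mathrm{H}^2(G,\mathbb{Q}[G])$; and your identification of these invariants with the $\mathbb{Q}[H]$-torsion submodule $\tau(\mathrm{res}_H^G D)$ is correct: an invariant element amounts to a function $\phi$ on $H$ with $\phi(t^n)=t^{-n}\phi(e)$ whose image spans a finite-dimensional subspace, i.e.\ a torsion element of $D$ over the PID $\mathbb{Q}[t,t^{-1}]$. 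This is an equivalent (in fact slightly cleaner) formulation of the paper's computation $(\mathrm{coind}_e^H(\mathbb{Q})\otimes_{\mathbb{Q}}M)^H\cong M$ for a finite-dimensional submodule $M\subseteq D$. The multiplication-by-$p$ long exact sequence in your third paragraph is also sound, though nothing in your argument ever uses it.

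The genuine gap is the final step, which is the mathematical heart of the lemma: you must produce nonzero $\mathbb{Q}[H]$-torsion in $D$, and at exactly this point you write that ``Kropholler's argument is invoked'' and that you ``expect the delicate part to be verifying that his reasoning survives the passage from $\mathbb{Z}$ to $\mathbb{Q}$'' --- that is, you defer the one assertion the proof actually has to establish; your auxiliary observation that $\tau(\mathrm{res}_H^G D)$ is a $\mathbb{Q}[G]$-submodule, while true, creates no torsion by itself. The missing idea is an Ore localization argument. Since $G=\mathrm{N}_G[H]$, every intersection $H\cap gHg^{-1}$ has finite index in both $H$ and $gHg^{-1}$, and as in the proof of \cite[Lemma 2.2]{kropholler} this makes the nonzero elements of $\mathbb{Q}[H]$ a right Ore set in $\mathbb{Q}[G]$; the anti-automorphism $\sum n_g g\mapsto\sum n_g g^{-1}$ converts this into the left Ore condition. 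Writing $K$ for the field of fractions of $\mathbb{Q}[H]$, the left Ore condition shows that $K\otimes_{\mathbb{Q}[H]}\mathbb{Q}[G]$ is divisible as a right $H$-module; it is also torsion-free, so $\mathrm{H}^2(G,K\otimes_{\mathbb{Q}[H]}\mathbb{Q}[G])=0$ by \cite[Lemma 1.2]{kropholler}, and since $G$ is of type $FP_2$ over $\mathbb{Q}$ (by the preceding duality-group lemma) this yields $K\otimes_{\mathbb{Q}[H]}D=0$. Hence \emph{every} element of $D$ is $\mathbb{Q}[H]$-torsion, and since $D\neq 0$, the torsion submodule is nonzero --- exactly the statement your proposal leaves open. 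In short, you have given a correct reduction to ``$\mathrm{res}_H^G D$ is not torsion-free over $\mathbb{Q}[t,t^{-1}]$,'' but no proof of it; the localization step is where the hypotheses $G=\mathrm{N}_G[H]$ and $FP_2$ over $\mathbb{Q}$ are actually consumed, and it cannot be dismissed as a routine transcription of Kropholler's integral argument.
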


\begin{proof}
In the proof of \cite[Lemma 2.2]{kropholler}, it is shown that the set of non-zero elements of $\mathbb{Q}[H]$ form a right Ore set in $\mathbb{Q}[G]$. Meaning that for every $a \in \mathbb{Q}[G]$ and $b \in \mathbb{Q}[H]$, there exists an $a' \in \mathbb{Q}[G]$ and $b' \in \mathbb{Q}[H]$ such that $ab'=ba'$. Using the anti-homomorphism
\[ s: \mathbb{Q}[G] \rightarrow \mathbb{Q}[G] : \sum_{g \in G}n_g g  \mapsto  \sum_{g \in G}n_g g^{-1} \]
one easily checks that the set of non-zero elements of $\mathbb{Q}[H]$ also form a left Ore set in $\mathbb{Q}[G]$. Meaning that for every $a \in \mathbb{Q}[G]$ and $b \in \mathbb{Q}[H]$, there exists an $a' \in \mathbb{Q}[G]$ and $b' \in \mathbb{Q}[H]$ such that $b'a=a'b$.	
Let $K$ be the field of fractions of $\mathbb{Q}[H]$ and consider the right $\mathbb{Q}[G]$-module $K\otimes_{\mathbb{Q}[H]} \mathbb{Q}[G]$. Using the left Ore condition, one checks that $K\otimes_{\mathbb{Q}[H]} \mathbb{Q}[G]$ is divisible as a right $H$-module. Since $K\otimes_{\mathbb{Q}[H]} \mathbb{Q}[G]$ is also torsion-free as a right $H$-module it follows from \cite[Lemma 1.2]{kropholler} that
$ \mathrm{H}^2(G,K\otimes_{\mathbb{Q}[H]} \mathbb{Q}[G])=0 $. Since $G$ is of type $FP_2$ over $\mathbb{Q}$, this implies that
\[  K\otimes_{\mathbb{Q}[H]} \mathrm{H}^2(G,\mathbb{Q}[G])=0 .    \] 
 Let $d$ be a non-zero element of $\mathrm{H}^2(G,\mathbb{Q}[G])$ and let $M$ be the left $\mathbb{Q}[H]$-submodule of $\mathrm{H}^2(G,\mathbb{Q}[G])$ generated by $d$. Let $t$ be a generator of $H$. Since $K \otimes_{\mathbb{Q}[H]} M =0$, there exists
an element $a=\sum_{i=0}^k q_i t^i \in \mathbb{Q}[H]$ such that $a\cdot d=0$. This implies that $\{d,t\cdot d,\ldots,t^{k-1}\cdot d \}$ is a generating set for $M$ as a $\mathbb{Q}$-vector space. In particular $M$ a non-zero $\mathbb{Q}[H]$-submodule of $\mathrm{H}^2(G,\mathbb{Q}[G])$ that is finite dimensional as a $\mathbb{Q}$-vector space.

By Corollary \ref{cor: rat end}, we know that $\tilde{e}(G,H)=\dim_{\mathbb{Q}} \mathrm{H}^1(G,\mathcal{F}_H^G(\mathbb{Q}))+1$. Hence, to prove the lemma we need to show that $\mathrm{H}^1(G,\mathcal{F}_H^G(\mathbb{Q}))\neq 0$. Since $G$ is a $2$-dimensional duality group over $\mathbb{Q}$, we have 
\[   \mathrm{H}^1(G,\mathcal{F}_H^G(\mathbb{Q}))\cong \mathrm{H}_1(G,\mathcal{F}_H^G(\mathbb{Q})\otimes_{\mathbb{Q}} D_G)   \]
where $D_G=\mathrm{H}^2(G,\mathbb{Q}[G])$. Since $\mathcal{F}_H^G(\mathbb{Q})\otimes_{\mathbb{Q}} D_G \cong \mathrm{ind}_H^G(\mathrm{coind}_e^H (\mathbb{Q}) \otimes_{\mathbb{Q}} D_G )$, it follows from Shapiro's lemma and Poincar\'{e} duality for $H$ that 
\[   \mathrm{H}^1(G,\mathcal{F}_H^G(\mathbb{Q}))=\mathrm{H}_1(H,\mathrm{coind}_e^H (\mathbb{Q}) \otimes_{\mathbb{Q}} D_G)= (\mathrm{coind}_e^H (\mathbb{Q}) \otimes_{\mathbb{Q}} D_G)^H.   \]
It follows that in order to prove the lemma, it suffices to shows that $(\mathrm{coind}_e^H (\mathbb{Q}) \otimes_{\mathbb{Q}} M)^H\neq 0$, where $M$ is the module constructed above. Since $M$ is finite dimensional as a $\mathbb{Q}$-vector space, there is an isomorphism of left $\mathbb{Q}[H]$-modules
\begin{equation}\label{eq: coind iso} \mathrm{Hom}_{\mathbb{Q}}(\mathbb{Q}[H],\mathbb{Q}) \otimes_{\mathbb{Q}} M \xrightarrow{\cong} \mathrm{Hom}_{\mathbb{Q}}(\mathbb{Q}[H],M): f\otimes m \mapsto \tilde{f}    \end{equation}
where $\tilde{f}(x)=f(x)m$ for all $x\in \mathbb{Q}[H]$. Here, the action of $h \in H$ on an element $\varphi \in \mathrm{Hom}_{\mathbb{Q}}(\mathbb{Q}[H],M) $ is given by $(h\cdot \ \varphi)(x)=h\varphi(xh)$ for all $x \in \mathbb{Q}[H]$. There is also an isomorphism of left $H$-modules
\[\mathrm{Hom}_{\mathbb{Q}}(\mathbb{Q}[H],M) \xrightarrow{\cong} \mathrm{Hom}_{\mathbb{Q}}(\mathbb{Q}[H],M) : f \mapsto \tilde{f}  \]
where $\tilde{f}(h)=hf(h)$ for every $h \in H$. Here the $H$-action on the left hand side is the one appearing on the right hand side of (\ref{eq: coind iso}), while on the right hand side $M$ has the trivial $H$-action. So the right hand side is $\mathrm{coind}_e^H( \mathrm{res}_e^H (M))$. It follows that 
\[  (\mathrm{coind}_e^H (\mathbb{Q}) \otimes_{\mathbb{Q}} M)^H \cong (\mathrm{coind}_e^H(\mathrm{res}^H_e(M)))^H \cong M \neq 0.  \]
\end{proof}

We can now prove Theorem \ref{th: kropholler}.
\begin{proof}
	Since $\tilde{e}(G,H)\geq 2$ by the lemma above, it follows from Lemma \ref{lemma : almost invariant} that $G$ has a proper $H$-almost invariant subset $B$ such that $BH=B$. The proof of \cite[Lemma 3.2]{kropholler} now carries through verbatim showing that $G$ splits over a virtually cyclic subgroup $C$ commensurable to $H$. This means that $G$ is either equal to a non-trivial amalgamated free product $K{\ast}_C L$ or an HNN-extension $G=K\ast_C$ for some subgroups $K,L$ of $G$. First suppose that $G=K{\ast}_C L$. 
	Note that $C$ cannot equal $K$ or $L$, since the amalgamated free product is non-trivial.
	If $C$ has index two in both $K$ and $L$, then $G$ fits into a short exact sequence
	
	\[   1 \rightarrow C \rightarrow G \rightarrow K/C\ast L/C \rightarrow 1.  \]
	Since $K/C\ast L/C$ is isomorphic to the infinite dihedral group and $C$ is infinite virtually cyclic, it is a standard exercise in group theory to check that $G$ contains $\mathbb{Z}^2=\mathrm{BS}(1,1)$. If $C$ has index at least three in either $K$ or $L$ then it follows from the Normal Form Theorem for free amalgamated products that $G$ contains $F_2$. Secondly, suppose that $G=K\ast_{C}$. Again there are two cases two consider. If $K=C$  then the HNN extension is ascending and determined by an injective homomorphism $\varphi: K \rightarrow K$. Since in this case $K$ is infinite virtually cyclic, it is not hard to see that one can find an infinite cyclic subgroup $\langle x \rangle $ of $K$ such that $\varphi$ restricts to $\langle x \rangle$.  Britton's lemma now shows that the subgroup of the ascending HNN extension $G$ generated by $x$ and the stable letter $t$ is isomorphic to a Baumslag-Solitar group $\mathrm{BS}(1,m)$, for some $m \in \mathbb{Z}\setminus \{0\}$.  On other hand, if $K\neq C$ then it follows from Britton's Lemma that $G$ contains $F_2$. This finishes the proof.
	
\end{proof}
We now turn to the proof of Corollary 3 from the introduction.

\begin{proof} Let $G$ be a group satisfying  $\underline{\mathrm{cd}}(G)=2$. It follows from Proposition \ref{prop: long exact} that in order to prove that $\underline{\underline{\mathrm{cd}}}(G)\leq 3$, if suffices to prove that
	$\mathrm{cd}_{\mathcal{F}[H]}(\mathrm{N}_G[H])\leq 3$, where $H$ is any infinite cyclic subgroup of $G$. By \cite[Lemma 3.1]{DP2}, it suffices to show that $\mathrm{cd}_{\mathcal{F}[H]\cap K}(K)\leq 2$, where $K$ is any finitely generated subgroup of $\mathrm{N}_G[H]$ containing $H$.

	 First suppose that $G$ does not contain $F_2$. In this case it follows from the proof of Theorem  \ref{th: kropholler} that $K$ is either an amalgam or an HNN extension where all edge and vertex groups are infinite virtually cyclic subgroups commensurable to $H$. In any case, all the point stabilizers of the action of $K$  on the associated Bass-Serre tree $T$ are commensurable to $H$, implying that $T$ is a $1$-dimensional model for $E_{\mathcal{F}[H]\cap K}K$. We conclude that $\mathrm{cd}_{\mathcal{F}[H]\cap K}(K)\leq 1\leq 2$, as desired.
	
	Next assume that $G$ is coherent, implying that $K$ is finitely presented (actually we only need $K$ to be $FP_2$ over $\mathbb{Z}$). In this case the arguments of the proof of  \cite[Theorem C]{kropholler} carry through completely: using the finite presentability (\cite[Th. 4.1 and below]{kropholler}) one shows that iterating Theorem \ref{th: kropholler} must eventually produce as splitting over virtually cyclic subgroups, proving that $K$ acts simplicially on a tree $T$ with infinite virtually cyclic vertex groups such that each edge stabilizer is commensurable to $H$. 
	Again, we conclude that $T$ is a model for $E_{\mathcal{F}[H]\cap K}K$ and  hence $\mathrm{cd}_{\mathcal{F}[H]\cap K}(K)\leq 1\leq 2$, as desired.
	
	Finally, suppose that $G$ is torsion-free. It this case it follows from \cite[Lemma 2.2]{kropholler} that $K$ is of type $FP_2$ over $\mathbb{Z}$, so the argument of the previous case applies.

	Now suppose $G$ is a one-relator group. Then $\underline{\mathrm{cd}}(G)\leq 2$ by \cite[4.12]{Luck}. If $G$ is torsion-free, then $\underline{\underline{\mathrm{cd}}}(G)\leq 3$ by the above. If $G$ has torsion, then $G$ is word-hyperbolic (e.g.~see \cite[p.~205]{LS}) and \cite[Prop. 9]{LearyPineda} implies that $\underline{\underline{\mathrm{cd}}}(G)\leq 2$.
	
\end{proof}

\begin{remark} \label{remark} \rm
	It follows from the proof above that in order to show that every countable group with $\underline{\mathrm{cd}}(G)=2$ satisfies $\underline{\underline{\mathrm{cd}}}(G)\leq 3$, it suffices to show that if $\Gamma$ is a finitely generated group with $\underline{\mathrm{cd}}(\Gamma)=2$ that has an infinite virtually cyclic subgroup $H$ such that $\mathrm{Comm}_{\Gamma}(H)=\Gamma$, then $\Gamma$ is of type $FP_2$ over $\mathbb{Z}$. If $\Gamma$ is not torsion-free, we are only able to prove that $\Gamma$ is of type $FP_2$ over $\mathbb{Q}$.

\end{remark}

\section{Proof of the Main Theorem}

Let $G$ be a finitely generated group with $\underline{\underline{\mathrm{cd}}}(G)\leq 1$. Our aim is to show that $G$ is virtually cyclic, i.e.~$\underline{\underline{\mathrm{cd}}}(G)=0$. We will argue by contradiction. To this end, assume that $\underline{\underline{\mathrm{cd}}}(G)= 1$. It follows from Lemma \ref{lemma: ration dim} that $\mathrm{cd}_{\mathbb{Q}}(G)=2$ and 
$\mathrm{H}^1(G,F)=0$ for any free $\mathbb{Q}[G]$-module $F$. Moreover, by Corollary \ref{cor: comm loc cyc} the commensurator $\mathrm{N}_G[H]$ of any infinite virtually cyclic subgroup $H$ of $G$ is locally virtually cyclic.
This implies that the family $\mathcal{F}[H]$ of $\mathrm{N}_G[H]$ coincides with the family $\mathcal{VC}$, hence  $\mathrm{cd}_{\mathcal{F}[H]}(\mathrm{N}_G[H])\leq 1$. It follows from Lemma \ref{lem: basic}(3) that 
\[  \mathrm{H}^2_{\mathcal{F}}(G,M) \xrightarrow{\cong}  \prod_{[H]\in \mathcal{I}}\mathrm{H}^2_{\mathcal{F}}(\mathrm{N}_G[H],M).\]
for any right $\mathcal{O}_{\mathcal{VC}}G$-module $M$. This holds in particular for every fixed point functor $\underline{V}$, where $V$ is a $\mathbb{Q}[G]$-module. Let $\mathcal{I}_{0}$ be the subset of $\mathcal{I}$ containing those $[H] \in \mathcal{I}$ for which $\mathrm{N}_G[H]$ is not virtually cyclic. Since finite groups have rational cohomological dimension zero, we conclude from the above and Lemma \ref{lem: fixed point} that

\begin{equation}\label{iso}    \mathrm{H}^2(G,V) \xrightarrow{\cong}  \prod_{[H]\in \mathcal{I}_0}\mathrm{H}^2(\mathrm{N}_G[H],V).\end{equation}
for every $\mathbb{Q}[G]$-module $V$. Since $\mathrm{cd}_{\mathbb{Q}}(G)=2$, we deduce that $\mathcal{I}_0$ is non-empty. Fix $[H]\in \mathcal{I}_0$, denote $N_G[H_0]=N$ and recall that $\mathrm{H}^1(\mathrm{N}_G[H],F)=0$ for any free $\mathbb{Q}[\mathrm{N}_G[H]]$-module $F$ (e.g.~see \cite[Lemma 1.1]{kropholler}) and any $[H]\in \mathcal{I}_0$.  Now consider the short exact sequence of $N$-modules
\begin{equation} \label{seq1}  0 \rightarrow I \rightarrow \mathbb{Q}[N] \xrightarrow{\varepsilon} \mathbb{Q} \rightarrow 0, \end{equation}
where $\varepsilon$ is the augmentation map. Since $\mathbb{Q}[N]^N=\mathrm{H}^1(N,\mathbb{Q}[N])=0$, the long exact cohomology sequence associated to this short exact sequence implies that $\mathrm{H}^1(N,I)\cong \mathbb{Q}$. Inducting (\ref{seq1}) from $N$ to $G$, we obtain a short exact sequence of $G$-modules
\begin{equation} \label{seq2}  0 \rightarrow I_N^G \rightarrow \mathbb{Q}[G] \xrightarrow{\varepsilon} \mathbb{Q}[G/N] \rightarrow 0.  \end{equation}
Since $\mathbb{Q}[G]^G=\mathrm{H}^1(G,\mathbb{Q}[G])=0$, the long exact cohomology sequence associated to this short exact sequence implies that $\mathrm{H}^1(G,I_N^G)=\mathbb{Q}[G/N]^G$. Since $\mathrm{cd}_{\mathbb{Q}}(N)\leq  2$, there exists a short exact sequence of $N$-modules
\begin{equation} \label{seq3}    0 \rightarrow \tilde{F}_2 \rightarrow \tilde{F}_1 \rightarrow I \rightarrow 0,  \end{equation}
where $\tilde{F}_1$ and $\tilde{F}_2$ are free $\mathbb{Q}[N]$-modules.
Inducting (\ref{seq3}) from $N$ to $G$, we obtain a short exact sequence of $G$-modules
\begin{equation} \label{seq4}    0 \rightarrow F_2 \rightarrow F_1 \rightarrow I_N^G \rightarrow 0,   \end{equation}
where $F_1$ and $F_2$ are free $\mathbb{Q}[G]$-modules.
Since $\mathrm{H}^1(\mathrm{N}_G[H], F_1)=\mathrm{H}^1(G, F_1)=0$ for every $[H]\in \mathcal{I}_0$, the long exact cohomology sequence associated to (\ref{seq4}) yields the exact sequences
\[  0 \rightarrow \mathrm{H}^1(G,I_N^G) \rightarrow \mathrm{H}^2(G, F_2) \rightarrow \mathrm{H}^2(G, F_1)    \]
and
\[  0 \rightarrow \mathrm{H}^1(\mathrm{N}_G[H],I_N^G) \rightarrow \mathrm{H}^2(\mathrm{N}_G[H], F_2) \rightarrow \mathrm{H}^2(\mathrm{N}_G[H], F_1)    \]
for every $[H]\in \mathcal{I}_0$. By (\ref{iso}), we obtain a commutative diagram with exact rows.

\[\xymatrix{	0 \ar[r] &  \mathrm{H}^1(G,I_N^G)  \ar[r] \ar[d] & \mathrm{H}^2(G, F_2) \ar[r] \ar[d]^{\cong} & \mathrm{H}^2(G, F_1) \ar[d]^{\cong} \\
	0 \ar[r] & \prod_{[H]\in \mathcal{I}_0} \mathrm{H}^1(\mathrm{N}_G[H],I_N^G)  \ar[r]  & \prod_{[H]\in \mathcal{I}_0}\mathrm{H}^2(\mathrm{N}_G[H], F_2) \ar[r]  & \prod_{[H]\in \mathcal{I}_0}\mathrm{H}^2(\mathrm{N}_G[H], F_1). }\]
We conclude that

\[   \mathbb{Q}[G/N]^G\cong \mathrm{H}^1(G,I_N^G)\cong  \prod_{[H]\in \mathcal{I}_0} \mathrm{H}^1(\mathrm{N}_G[H],I_N^G) .\]

Since $I$ is a direct summand of $I_N^G$, as $N$-modules, $\mathrm{H}^1(N,I_N^G)$ has $\mathrm{H}^1(N,I)\cong\mathbb{Q} $ as a direct summand. We conclude that \[ \mathbb{Q}[G/N]^G \neq 0,\] which implies that $N$ has finite index in $G$. Since $G$ is finitely generated but $N$ is not, this is a contradiction. We conclude that $\underline{\underline{\mathrm{cd}}}(G)= 0$, i.e~$G$ is virtually cyclic.

The Main Theorem now follows easily: if $G$ is a group with  $\underline{\underline{\mathrm{cd}}}(G)\leq 1$, then every finitely generated subgroup $K$ of $G$ also satisfies $\underline{\underline{\mathrm{cd}}}(K)\leq 1$ and is therefore virtually cyclic by the above. This proves that $G$ is locally virtually cyclic. 
\begin{flushright}
$\qed$
\end{flushright}

\section*{Acknowledgment} The author was partially supported by the Danish National Research Foundation through the Centre for Symmetry and Deformation (DNRF92).
The author is also grateful to the Centro de Ciencias Matem\'{a}ticas, UNAM, Morelia, and in particular to No\'{e} B\'{a}rcenas, for their hospitality during a visit when part of this work was completed.


\begin{thebibliography}{12345}
\bibitem{BradyLearyNucinkis} Brady, N., Leary I. , and Nucinkis, B..
{\em On algebraic and geometric dimensions for groups with torsion},  J. Lond. Math. Soc.  Vol 64(2) (2001),  489--500
\bibitem{Bredon} Bredon, G.E.,
{\em Equivalent cohomology theories}, Lecture Notes in Mathematics $34$, Springer ($1967$)
\bibitem{DP1} Degrijse, D. and Petrosyan, N.,
{\em Commensurators and classifying spaces  with virtually cyclic stabilizers}, Groups, Geometry, and Dynamics Vol. 7(3)  (2013), 543--555
\bibitem{DP} Degrijse, D. and Petrosyan, N.,
{\em Geometric dimension of groups for the family of virtually cyclic subgroups},  Journal of Topology (2014) 7(3) 697--726
\bibitem{DP2} Degrijse, D. and Petrosyan, N.,
{\em Bredon cohomological dimensions for groups acting on CAT(0)-spaces}, Groups, Geometry and Dynamics,  9(4) (2015), 1231--€"1265
\bibitem{DP3} Degrijse, D. and Petrosyan, N.,
{\em F-structures and Bredon-Galois cohomology},  Appl. Categor. Struct. 21 (2013), 545--586
 \bibitem{DemPetTal} Dembegioti, F., Petrosyan, N. and Talelli, O.,
 {\em Intermediaries in Bredon (Co)homology and Classifying Spaces}, Publicacions Matem\`{a}tiques 56 (2) (2012), 393--412
\bibitem{Dunwoody} Dunwoody, M.J.,
{Accessibility and groups of cohomological dimension one}, London Math. Soc. (3) 38 (1979), 193--215
\bibitem{EilenbergGanea} Eilenberg S. and Ganea, T.
 {\em On the Lusternik-Schnirelmann category of abstract
 	groups},  Ann. of Math. (2) 65 (1957), 517--518
\bibitem{FluchThesis} Fluch M., 
{\em On Bredon (co-)homological dimensions of groups,} Ph.D. Thesis, University of Southampton, arXiv:$1009.4633$
\bibitem{FluchLeary} Fluch, M. and Leary, I.J.,
{\em An Eilenberg-Ganea Phenomenon for Actions with Virtually Cyclic Stabilizers}, Groups, Geometry, and Dynamics 8(1) (2014), 135--142
\bibitem{FluchNucinkis} Fluch, M., and Nucinkis, B. E. A, 
\emph{On the classifying space for the family of virtually
	cyclic subgroups for elementary amenable groups}, Proc. Amer. Math. Soc. 141(11) (2013), 3755--3769
\bibitem{kropholler} Kropholler, P.H.,
{\em Baumslag-Solitar groups and some other groups of cohomological dimension two}, Commentarii Mathematici Helvetici, Vol. 65(1) (1990), 547--558
\bibitem{KrophollerRoller} Kropholler P.H., and Roller M.A.,
 {\em Relative ends and duality groups}, Journal of Pure and Applied Algebra Vol. 61 (1989),  197--210
\bibitem{LearyPineda} Juan-Pineda, D., and Leary, I. J.,
{\em On classifying spaces for the family of virtually cyclic subgroups}, Recent developments in alg. top., Vol. 407 Contemp. Math., Am. Math. Soc. (2006), 135--145
\bibitem{Luck} L\"{u}ck, W.,   
	{\em Survey on classifying spaces for families of subgroups}, Infinite Groups: Geometric, Combinatorial and Dynamical Aspects, Springer (2005), 269--322
	 \bibitem{LuckL2Book}  L\"{u}ck, W.,
 {\em L2-invariants: theory and applications to geometry and K-theory}, Vol 44 , A Series of Modern Surveys in Mathematics. Springer-Verlag, Berlin, 2002
    
 \bibitem{LuckMeintrup} L\"{u}ck, W. and Meintrup, D.,
{\em On the universal space for group actions with compact isotropy}, Proc. of the conference ``Geometry and Topology'' in Aarhus, (1998), 293--305
\bibitem{LuckReich}  L\"{u}ck, W. and Reich H.,
{\em The Baum-Connes and the Farrell-Jones conjectures in K- and L-theory}, Handbook of K-theory. Vol. 2, Springer, Berlin (2005), 703--842
 \bibitem{LuckWeiermann} L\"{u}ck, W.,  and Weiermann, M.,
 {\em On the classifying space of the family of virtually cyclic subgroups}, Pure and Applied Mathematics Quarterly, Vol. 8(2)  (2012), 497-555
\bibitem {LS} Lyndon, R. and Schupp, P.,
{\em Combinatorial group theory}, Springer (1977)
\bibitem{Martinez} Mart\'{\i}nez-P\'{e}rez, C.,
    {\em A spectral sequence in Bredon (co)homology}, J. Pure Appl. Algebra 176 (2002), 161--173.
 \bibitem{nucinkis} Nucinkis, B.,
 {\em On dimensions in Bredon homology}, Homology, Homotopy and Applications Vol. 6(1) (2004),  33--47
 \bibitem{SerreTrees} Serre, J.-P.,
 {\em Trees}, Springer (1980).
\bibitem{Stallings} Stallings J.R,
{\em  Groups of dimension 1 are locally free},  Bull. Amer. Math. Soc. 74 (1968), 361--364
\bibitem{Swan} Swan, R.G,
 {\em Groups of cohomological dimension one}, J. Algebra 12 (1969), 585--610
\bibitem{Symonds} Symonds, P.,
{\em The Bredon cohomology of subgroup complexes}, J. Pure Appl. Algebra
199 (2005), 261--298





\end{thebibliography}
\end{document}